\newcommand{\gridThreeD}[5]
{
    \begin{scope}
        \draw [#3,step=1cm] (#1,#2) grid (#4+#1,#5+#2);
    \end{scope}
}
\newcommand{\overbar}[1]{\mkern 1.5mu\overline{\mkern-1.5mu#1\mkern-1.5mu}\mkern 1.5mu}
\theoremstyle{definition}
\newtheorem{theorem}{Theorem}[section]
\newtheorem{lemma}[theorem]{Lemma}
\newtheorem{proposition}{Proposition}[theorem]
\title{A Proof of the Optimal Leapfrogging Conjecture}
\author{Sam K. Miller, Arthur T. Benjamin}
\date{August 7, 2023}
\begin{document}
\maketitle

\section{Introduction}
Suppose we have some checkers placed in the lower left corner of a Go board, and we wish to move them to the upper right corner in as few moves as possible. There are no opponent pieces present, and the pieces move as they would in the game of Chinese Checkers, where for one move, a piece may either shift one unit in any direction, or repeatedly leapfrog over other pieces. \\

Let us consider the Go board as a subset of the non-negative integer lattice $\mathds{Z}^2$. As an example, suppose we have four pieces placed at the coordinates $(0,0)$, $(1,0)$, $(0,1)$, and $(1,1)$, and wish to move them to the squares $(9,9)$, $(10,9)$, $(9,10)$, and $(10,10)$. For the pieces to complete the task in as few moves as possible, the pieces must first be moved into a configuration such that they may jump over each other in an optimal way.\\

We may intuitively attempt lining the checkers up diagonally in what we will call a \textit{snake configuration}, that is, moving the pieces to coordinates $(0,0)$, $(1,1)$, $(2,2)$, and $(3,3)$. By repeating the three-move process of shifting the backmost piece to the right [$(0,0) \xrightarrow{} (1,0)$], leapfrogging that piece to the front [$(1,0) \xrightarrow{} (3,4)$], then shifting it right again [$(3,4) \xrightarrow{} (4,4)$], we can reach our destination in $4 + 4 + (3\times7) = 29$ moves. 
We say that the snake configuration has a speed of 2/3 since in 3 moves, it makes a forward progress of 2. (Every piece is shifted in the direction (1, 1).) \\

However a faster method exists. We first move the pieces into what we call a \textit{serpent configuration}, with the pieces on coordinates $(0,0)$, $(1,0)$, $(1,1)$, and $(2,1)$. Then we repeat the two-move process of leapfrogging the backmost piece to the front [$(0,0) \xrightarrow{} (2,2)$] then leapfrogging the new backmost piece to the front again [$(1,0) \xrightarrow{} (3,2)$], we may reach our destination in $1 + 1 + (2 \times 8) = 18$ moves. This is indeed the fastest way of moving the checkers from the bottom left to the upper right. See Figure \ref{fig2.1}. Note that the serpent configuration has a speed of 1, since it requires only 2 moves to give it a forward progress of 2. \\

It was proved in \cite{GMT} that in a wide class of optimization problems on lattice structures (such as $\mathds{Z}^n$), the optimal way to move objects from one location to another  far away location, is to spend most of the time repeatedly translating very efficient configurations. 
In their 1993 paper \cite{Leapfrogging}, it was shown by Auslander, Benjamin, and Wilkerson that the maximum speed of any configuration is 1,  there are essentially only three configurations that achieve this speed, and they have at most four pieces. It was conjectured that with five or more pieces, the maximum attainable speed is 2/3. In this paper, we prove that conjecture.

\section{Abstracting the game}

Suppose we have $p$ indistinguishable pieces and wish to move them in the positive direction over the integer lattice $\mathds{Z}^n$. If a piece is located at coordinate $l \in \mathds{Z}^n$, and some other coordinate $l + e_i$ is not occupied by a piece (for unit vector $e_i$), then the piece may \textit{shift} there. Alternatively if $l + e_i$ is occupied but $l + 2e_i$ is not, the piece may \textit{hop} over the occupant of $l + e_i$ to land at $l + 2e_i$, where it may remain or continue hopping over other adjacent pieces. One legal \textit{move} consists of either a shift or a \textit{jump}, a sequence of one or more hops by a single piece.\\

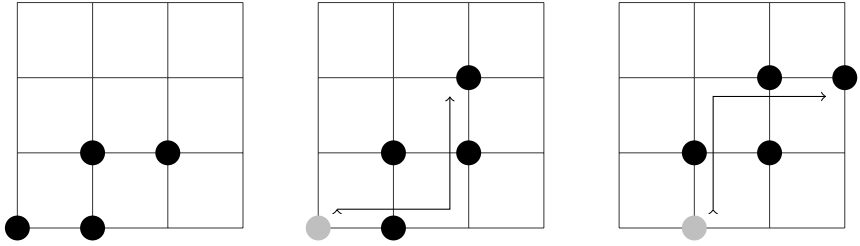
\begin{figure}[h]
    \centering
    \begin{tikzpicture}
        \gridThreeD{0}{0}{black!75}{3}{3}
        \gridThreeD{4}{0}{black!75}{3}{3}
        \gridThreeD{8}{0}{black!75}{3}{3}
        \node[] at (0,0) [circle, fill=black] {};
        \node[] at (1,0) [circle, fill=black] {};
        \node[] at (1,1) [circle, fill=black] {};
        \node[] at (2,1) [circle, fill=black] {};
        
        \node[] at (4,0) [circle, fill=black!25] {};
        \node[] at (5,0) [circle, fill=black] {};
        \node[] at (5,1) [circle, fill=black] {};
        \node[] at (6,1) [circle, fill=black] {};
        \node[] at (6,2) [circle, fill=black] {};
        \draw[->, to path={-| (\tikztotarget)}]
            (4.25,0.25) edge (5.75,1.75);
        
        \node[] at (9,0) [circle, fill=black!25] {};
        \node[] at (9,1) [circle, fill=black] {};
        \node[] at (10,1) [circle, fill=black] {};
        \node[] at (10,2) [circle, fill=black] {};
        \node[] at (11,2) [circle, fill=black] {};
        \draw[->, to path={|- (\tikztotarget)}]
            (9.25,0.25) edge (10.75,1.75);
    \end{tikzpicture}
        
    \caption{An example of the serpent's $2$-move trajectory, consisting of 2 jumps. The placements in the leftmost and rightmost diagrams are translates, and represented by the same configuration, the serpent. These two placements have displacement 2, and require 2 moves to reach one from the other. Hence, the serpent is a speed of light configuration, i.e. it has speed 1. }
    \label{fig2.1}
\end{figure}

Define a \textit{placement} of size $p$ as a finite subset of $\mathds{Z}^n$, denoted by $X = \{\vec{x}_1, \dots , \vec{x}_p\}$. Define the \textit{centroid} of placement $X$ to be
\[c(X) = \frac{1}{p}\sum_{u=1}^p \vec{x}_u.\]

For placements $X, Y$, define their \textit{displacement} as \[d(X,Y) = \sum_{i=1}^n |c_i(X) - c_i(Y)|.\]

For $m \geq 1$, an \textit{m-move trajectory} $X_0, X_1, ..., X_m$ is a sequence of placements where $X_{u+1}$ is reachable from $X_u$ in a single legal move. The \textit{speed} of an $m$-move trajectory from $X_0$ to $X_m$ is \[s = \frac{d(X_0,X_m)}{m}.\]

We say that placements $X, Y$ are \textit{translates} if there exists $\vec{a} \in \mathds{Z}^n$ such that $X + \vec{a} = Y$. We say that translates $X$ and $Y$ are represented by the same \textit{configuration} of pieces, and define the \textit{speed} of a configuration $C$ to be the maximum speed attained by any trajectory between two translates represented by $C$. \\

Auslander, Benjamin, and Wilkerson proved in 1993 the following: the maximum speed of any configuration $C$ is 1, and that only three configurations (called \textit{speed of light} configurations) attain this speed in $\mathds{Z}^n$ for $n\geq1$.\cite{Leapfrogging} These configurations are: 

\begin{itemize}
    \item The \textbf{atom} \{$x$\} (if $p = 1$).
    \item The \textbf{frog} \{$x$, $x + e_i$\}, $1 \leq i \leq n$ (if $p = 2$).
    \item The \textbf{serpent} \{$x$, $x + e_i$, $x + e_i + e_j$, $x + 2e_i + e_j$\} $1 \leq i \neq j \leq n$ (if $p = 4$ and $n >1$).
\end{itemize}

It was conjectured that the maximum attainable speed for any configuration on $p \neq 1,2,4$ is $2/3$, which we may observe is attained by the snake configuration with any number of pieces.\cite{Leapfrogging} We will show that in  $\mathds{Z}^2$, aside from these speed of light configurations,  the maximum achievable speed is $2/3$.

\section{Definitions and Properties}

Let $m \in \mathds{Z}$ and placement $X \in \mathds{Z}^n$. Then \textit{border} $l_m$ is defined by: \[l_m = \{x \in X: ||x||=m\}.\]

For a placement $X$, we may define the \textit{tail} (respectively, \textit{head}) of $X$, by $t(X) = \min _u$ $|l_u| \geq 0$ (respectively, $h(X) = \max _u$ $|l_u| \geq 0$). Define the \textit{width} of a placement $X$ $w(X) = h(X) - t(X) + 1$. Define the \textit{back border} (respectively, \textit{front border}) of $X$ as $T(X) := l_{t(X)}$ (respectively, $H(X) := l_{h(X)}$).\\ For example, if in the first diagram in Figure \ref{fig2.1}, the lower left piece is at $(0,0)$, then its configuration has a tail of 0 and a head of 3. 

We now define an underlying configuration which reoccurs in optimal play. A \textit{ladder} of length $k > 0$ is subset of a placement $X$: $L = \{p_0, p_1, ..., p_k\} \subseteq{X}$ such that $p_0$ is able to hop over $p_1,\dots, p_k$ successively. If $\{p_0\} = T(X)$ and $p_k \in H(X)$, then we say $L$ is a \textit{true ladder} of $X$. We call the move consisting of $p_0$ jumping over the rest of the ladder pieces a \textit{climb}, call $p_0$ the \textit{base} of the ladder, and the other pieces the \textit{rungs}. 

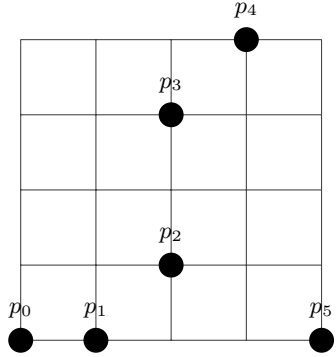
\begin{figure}[h]
    \centering
    \begin{tikzpicture}
        \gridThreeD{0}{0}{black!75}{4}{4}
        \node[label={\small $p_0$}] at (0,0) [circle, fill=black] {};
        \node[label={\small $p_1$}] at (1,0) [circle, fill=black] {};
        \node[label={\small $p_2$}] at (2,1) [circle, fill=black] {};
        \node[label={\small $p_3$}] at (2,3) [circle, fill=black] {};
        \node[label={\small $p_4$}] at (3,4) [circle, fill=black] {};
        \node[label={\small $p_5$}] at (4,0) [circle, fill=black] {};
    \end{tikzpicture}
        
    \caption{An example of a placement $X$ which contains a ladder. The ladder is $\{p_0,p_1, p_2, p_3, p_4\}$, with $p_0$ is the base of the ladder, and  pieces $p_1, p_2, p_3, p_4$ are the rungs. Here $\{p_0\} = T(X)$ and $\{p_4\} = H(X)$, so the ladder is a true ladder of $X$. $X$ has width 8.} 
    \label{fig3.1}
\end{figure}

\begin{proposition}
    If a configuration $X$ contains a true ladder $L$, $X$ has even width.
\end{proposition}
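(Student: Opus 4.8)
The plan is to extract a parity invariant that is preserved along the climb and then read the width off from it. Since both the width of a configuration and the property of possessing a true ladder are invariant under translation, I would begin by fixing a placement $X$ representing the configuration and writing its true ladder as $L=\{p_0,p_1,\dots,p_k\}\subseteq X$ with $k\geq 1$. By definition $\{p_0\}=T(X)=l_{t(X)}$, so $\|p_0\|=t(X)$, and $p_k\in H(X)=l_{h(X)}$, so $\|p_k\|=h(X)$; here $\|\cdot\|$ is the coordinate sum $\|x\|=x_1+\dots+x_n$ that indexes the borders $l_m$.

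The core step is to follow the base piece through the climb. Set $q_0=p_0$, and for $1\leq j\leq k$ let $q_j$ be the position of the base after it has successively hopped over $p_1,\dots,p_j$. A single hop over $p_j$ requires that $p_j-q_{j-1}$ be a (signed) unit vector, after which the base lands at the reflection $q_j=2p_j-q_{j-1}$. Applying $\|\cdot\|$ gives $\|p_j\|-\|q_{j-1}\|=\pm 1$, and consequently $\|q_j\|-\|q_{j-1}\|=2\bigl(\|p_j\|-\|q_{j-1}\|\bigr)=\pm 2$ for every $j$.

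A short induction then finishes the argument. Since $\|q_0\|=t(X)$ and each hop shifts the base's level by $\pm 2$, we get $\|q_j\|\equiv t(X)\pmod 2$ for all $j$; hence every rung satisfies $\|p_j\|=\|q_{j-1}\|\pm 1\equiv t(X)+1\pmod 2$. In particular $h(X)=\|p_k\|\equiv t(X)+1\pmod 2$, so $h(X)-t(X)$ is odd and $w(X)=h(X)-t(X)+1$ is even.

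I do not anticipate a genuine obstacle here; the proof is essentially this parity bookkeeping. The two points deserving care are confirming that the norm governing the borders is the coordinate sum (so that one hop moves a piece's level by exactly $\pm 2$, regardless of whether the hop is oriented forward or backward), and verifying that the phrase ``$p_0$ is able to hop over $p_1,\dots,p_k$ successively'' means precisely that each rung $p_j$ lies a unit step from the current base position $q_{j-1}$, which is what legitimizes the recursion $q_j=2p_j-q_{j-1}$. Neither requires more than unwinding the definitions.
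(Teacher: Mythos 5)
Your proof is correct and takes essentially the same route as the paper's: both rest on the single observation that each hop shifts the base's border level by exactly $\pm 2$, so the base keeps the parity of $t(X)$ throughout the climb. The only cosmetic difference is that you read the parity off the last rung $p_k\in H(X)$, giving $h(X)\equiv t(X)+1 \pmod 2$, whereas the paper reads it off the base's final landing square on border $h(X)+1$; both immediately yield that $w(X)=h(X)-t(X)+1$ is even.
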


\begin{proof}
    Observe that when a piece $p$ hops over another piece $p'$, then $p$ either increases or decreases its border by 2. Therefore since $l_0$ jumps from $B_{t(X)}$ to $B_{h(X) + 1}$, this implies $h(X) + 1 - t(X)$ is even. 
\end{proof}

\begin{proposition}
    A placement $X$ with more than one piece can perform a move that simultaneously increases $t(X)$ and $h(X)$ if and only if it has a true ladder.
\end{proposition}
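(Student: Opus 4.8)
The plan is to prove each implication directly from the definitions, using the one arithmetic fact already exploited in the preceding proposition: a single hop along a coordinate axis changes a piece's border by exactly $\pm 2$ and, being a reflection through the jumped-over piece, leaves the jumper exactly one border past the piece it hopped over. Below, for a piece $x$ in a placement write $\|x\|$ for its border, and use repeatedly that a legal move relocates only one piece, so every other piece's border is unchanged.

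\noindent ($\Leftarrow$) Suppose $X$ contains a true ladder $L=\{p_0,\dots,p_k\}$, so $\{p_0\}=T(X)$, $p_k\in H(X)$, and the climb of $L$ is a legal move. By the reasoning of the preceding proposition this climb carries $p_0$ from border $t(X)$ to border $h(X)+1$, so the resulting placement has head $h(X)+1>h(X)$. Since $\{p_0\}=T(X)$ and $|X|>1$, every other piece of $X$ lies on a border exceeding $t(X)$, so once $p_0$ departs the tail is at least $t(X)+1>t(X)$. Thus the climb raises $t(X)$ and $h(X)$ simultaneously.

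\noindent ($\Rightarrow$) Suppose a move sends $X$ to $X'$ with $t(X')>t(X)$ and $h(X')>h(X)$; let $p_0$ be the piece that moves. First, $T(X)=\{p_0\}$: if $p_0\notin T(X)$ then all of $T(X)$ survives in $X'$ and $t(X')\le t(X)$, while if $T(X)$ held a second piece that piece would survive in $X'$, again giving $t(X')\le t(X)$ — either contradicts $t(X')>t(X)$. In particular, as $|X|>1$ some piece lies strictly beyond border $t(X)$, so $h(X)>t(X)$. Next, every surviving piece has border at most $h(X)$, so $h(X')>h(X)$ forces the final position $q$ of $p_0$ to satisfy $\|q\|\ge h(X)+1$. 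Then $\|q\|-\|p_0\|=\|q\|-t(X)\ge 2$, which rules out a shift (a shift changes the moved piece's border by $\pm 1$); hence the move is a jump. Write it as $p_0=q_0\to q_1\to\dots\to q_m=q$, the $i$-th hop passing over the piece $r_i\in X$. Then $q$ is an axis-neighbour of $r_m$, so $\|q\|=\|r_m\|\pm 1$; since $\|q\|\ge h(X)+1>h(X)\ge\|r_m\|$, we get $\|q\|=\|r_m\|+1$, hence $\|r_m\|\ge h(X)$, and combined with $\|r_m\|\le h(X)$ this gives $r_m\in H(X)$. Therefore $\{p_0,r_1,\dots,r_m\}\subseteq X$ is a ladder (the base $p_0$ successively hops over $r_1,\dots,r_m$, which is exactly the jump) with $\{p_0\}=T(X)$ and $r_m\in H(X)$, i.e. a true ladder of $X$.

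The step I expect to be the crux is this last-hop analysis in ($\Rightarrow$): extracting from ``$p_0$ finishes strictly beyond the old head'' the fact that the final piece it hopped over lies exactly on the front border $H(X)$. A minor technical wrinkle, dispatched in a sentence, is that a jump may in principle hop over the same piece more than once, so $r_1,\dots,r_m$ could repeat; this is harmless — replace the jump by a hop-minimal one witnessing the move, or simply read off the shorter true ladder — and, symmetrically, in ($\Leftarrow$) the climb one uses is the forward one guaranteed by the preceding proposition.
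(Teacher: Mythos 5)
Your proof is correct and takes essentially the same route as the paper's: the backward direction observes that the climb vacates the singleton back border and lands one border past the front, and the forward direction identifies the moving piece as the sole occupant of $T(X)$, rules out a shift, and reads the jump off as a true ladder. Your last-hop analysis establishing $r_m \in H(X)$ simply makes explicit a step the paper asserts without detail.
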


\begin{proof}
    $(\impliedby)$Performing a ladder climb increases both $t(X)$ and $h(X)$, as $l_0$ moves from $T(X)$, leaving that border empty, and jumps in front of $l_k \in H(X)$, thus advancing the front border. \\
    
    $(\implies)$ If a move on $X$ exists that advances the front and back borders forward, since only one piece can change positions, the back border must only have one piece, and it must be the piece which moves. Call this piece $p$. Since $X$ has more than one piece,  $w(X) > 1$, so $p$ must jump from $T(X)$ to in front of $H(X)$. Denote the sequence of pieces hopped over by $p$ by $p_1$, $p_2$, ... $p_k$. Since $p_k \in H(X)$, $\{p_0, p_1, ... , p_k\}$ is a true ladder. 
\end{proof}

We may classify possible moves into seven categories.

\begin{itemize}
    \item \textit{Ascent}: A move that increases $h(X)$ and $t(X)$. This is necessarily a ladder climb. 
    \item \textit{Front Push}: A move that increases $h(X)$ but not $t(X)$.
    \item \textit{Back Push}: A move that increases $t(X)$ but not $h(X)$.
    \item \textit{Dead Move}: A move that changes neither the tail nor head of $X$.
    \item \textit{Front Retreat}: A move that decreases the head of $X$.
    \item \textit{Back Retreat}: A move that decreases the tail of $X$.
    \item \textit{Reverse Ascent}: A move that decreases both the head and the tail of $X$.
\end{itemize}

For example, in the placement in Figure \ref{fig3.1}, $p_0$ climbing the ladder would be an ascent. $p_4$ shifting to the right would be a front push. $p_0$ jumping over $p_1$ and $p_2$ only would be a back push. $p_5$ shifting in any direction would be a dead move. \\

An ascent is necessarily a ladder climb for nontrivial placements. For a legal $m$-move trajectory $M = \{X_0, X_1, ..., X_m\}$, where $X_0$ is a translate of $X_M$, define the \textit{moveset} of $M$ as a collection of moves $ m(M) = \{x_0 \to x_0',$ ... $, x_{m-1} \to x_{m-1}'\}$, where $x_i$ is the location of the piece that moves in $X_i$, and $x_i'$ is the location of the moved piece in $X_{i+1}$.\\

\begin{proposition} 
    In any moveset, the total number of front pushes, back pushes, front retreats and back retreats which occur between two ascents must be even.
\end{proposition}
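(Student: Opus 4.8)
The plan is to track the parity of the width $w(X) = h(X) - t(X) + 1$ as the placement evolves between two ascents. The engine is the following claim, which I would prove first: \emph{every legal move changes $h(X)$ by $+1$, $0$, or $-1$, and likewise changes $t(X)$ by $+1$, $0$, or $-1$.} To see this for $h$, suppose a move increases $h(X)$; let $p$ be the moving piece, landing at border $b'$. Every piece had border at most $h(X)$ beforehand, so $b' > h(X)$ and $b'$ is the new head. If the move is a shift, $b' = h(X)+1$. If it is a jump, its last hop cannot be backward — that would leave a stationary piece at border $b'+1$, above the new head — so it is a forward hop over a piece at border $b'-1$; that piece was already present, so $h(X) \ge b'-1$, forcing $h(X) = b'-1$. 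Either way $h$ rises by exactly $1$. The case where $h$ falls is symmetric: $p$ must then be the sole piece on the front border, and (for a jump) its first hop is forced over a piece at border $h(X)-1$, which becomes the new head. The statements for $t(X)$ are proved the same way, using that no piece lies below border $t(X)$.

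Next I would record the effect of each move type on $w$. Since only one piece moves, no move can raise one of $h,t$ while lowering the other, so the seven categories are exactly the cases where each of $h$ and $t$ individually rises by $1$, stays, or falls by $1$ (using the claim). An ascent raises both, so it fixes $w$; a reverse ascent and a dead move also fix $w$. A front push is $(\Delta h,\Delta t) = (+1,0)$, a back retreat is $(0,-1)$, a front retreat is $(-1,0)$, and a back push is $(0,+1)$, so each of these four changes $w$ by $\pm 1$. Hence the four move types in the statement change $w$ by an odd amount, and every other move leaves $w$ unchanged.

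Then I would close the argument. Take two ascents in the moveset, say the moves $X_{a-1}\to X_a$ and $X_{b-1}\to X_b$ with $a<b$. A placement immediately before an ascent contains a true ladder and hence has even width; so $w(X_{b-1})$ is even, and $w(X_{a-1})$ is even, and therefore so is $w(X_a)$ because the ascent $X_{a-1}\to X_a$ preserves width. Summing the width changes along $X_a\to X_{a+1}\to\cdots\to X_{b-1}$ gives $w(X_{b-1})-w(X_a)$, which is even; but this sum is congruent modulo $2$ to the number of front pushes, back pushes, front retreats and back retreats among those moves, because every other move (including any ascents lying strictly between) contributes $0$. Hence that number is even.

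The step I expect to be the real work is the claim in the first paragraph — in particular ruling out a jump that moves the head or tail border by $2$ or more. The decisive observation is that the final hop of any jump leaves the moving piece immediately next to (in border) a stationary piece, which pins the new extreme border to within one unit of a border that was already occupied; the rest is bookkeeping over the seven move types.
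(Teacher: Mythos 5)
Your argument is correct and takes essentially the same route as the paper's proof: track the parity of the width $w$ between the two ascents, use that a placement admitting an ascent must have even width (Proposition 3.0.1), and observe that the four listed move types are exactly the ones that change that parity. The only difference is that you explicitly justify the $\pm 1$ bound on how a single move can change $h$ or $t$ (and hence that the other three move types preserve $w$), a step the paper leaves implicit.
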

\begin{proof}
    Since ascents can only occur when a configuration has even width, the configurations immediately before and after any ascents must have even width. Therefore, the number of moves between two ascents which change the width parity must be even. The four listed move types are the only move types which change the parity of the width of a configuration, so the result follows. 
\end{proof}

For a move trajectory $M$, let $A(M)$ represent the number of ascents in $m(M)$, $FP(M)$ represent the number of front pushes, $BP(M)$ the number of back pushes, $DM(M)$ the number of dead moves, $FR(M)$ the number of front retreats, $BR(M)$ the number of back retreats, and $RA(M)$ the number of reverse ascents. \\

Now, define the \textit{weight} $\omega(M)$ of a trajectory $M$ or its corresponding moveset $m(M)$ as follows:
\[\omega(M)  := A(M) - (1/2) \cdot (FP(M) + BP(M)) - 2 \cdot DM(M) - (7/2) \cdot (FR(M) + BR(M)) - 5 \cdot RA(M) \]
    
This definition rewards (with higher weight) trajectories that make many efficient moves (e.g. ascents) and penalizes trajectories that use moves that make little forward progress or worse. For example, consider the trajectory where we begin with a snake with 5 pieces, located at points $\{(0,0), (1,1), (2, 2), (3, 3), (4, 4)\}$,  and in 3 moves (consisting of one back push, one ascent, and one front push), translate it to the points $\{(1,1), (2, 2), (3, 3), (4, 4), (5,5)\}$. Such a trajectory, with speed 2/3, would have weight $1 - (1/2)(1 + 1) = 0.$ 
Additionally, call the coefficient corresponding to each move type the \textit{move weight}. If a sequence of moves are performed, then the weight of the sequence is the sum of all the move weights. If we partition a trajectory $M = M_1 \oplus M_2 \oplus ... \oplus M_k$, then $\omega (M) = \omega(M_1) + ... + \omega(M_k)$. 

\begin{lemma}
    A $m$-move trajectory $M$ of a configuration $C$ has speed greater than $2/3$ if and only if $\omega(M) > 0$
\end{lemma}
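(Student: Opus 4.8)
The plan is to prove the exact identity
\[
\omega(M)\;=\;3\bigl(h(X_m)-h(X_0)\bigr)\;-\;2m ,
\]
after which the lemma is immediate: writing $X_m=X_0+\vec a$ (the pieces being advanced in the positive direction, so $\vec a$ has nonnegative entries) we have $d(X_0,X_m)=\sum_i a_i=h(X_m)-h(X_0)$, so $\omega(M)>0\iff 3\,d(X_0,X_m)-2m>0\iff s=d(X_0,X_m)/m>2/3$.

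The heart of the argument would be a case analysis of the seven move types showing that \emph{every} move changes the pair $\bigl(h(X),t(X)\bigr)$ by exactly one of the vectors $(1,1),(1,0),(0,1),(0,0),(-1,0),(0,-1),(-1,-1)$ — corresponding to an ascent, front push, back push, dead move, front retreat, back retreat, and reverse ascent respectively (the two vectors $(1,-1),(-1,1)$ are impossible, since a piece that advances the head lands at border $h(X)+1$ and so cannot also lie below the tail). For a shift this is clear, as a shift changes one border by $\pm1$. The only substantive point is that a \emph{jump} cannot move the head (resp. tail) by more than $1$ in the relevant direction: if the jumping piece finishes beyond $H(X)$ its final border is exactly $h(X)+1$, because once it reaches border $h(X)+1$ there is no fixed piece at border $h(X)+2$ for it to hop over; and when the tail piece vacates a singleton back border $T(X)$, the first rung of its ladder sits at border $t(X)+1$ and is not moved, so the new tail is $t(X)+1$. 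The retreat and reverse-ascent cases follow by the mirror argument, and the single-piece and width-one configurations are checked directly.

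Summing these increments over $M$ and using additivity gives $h(X_m)-h(X_0)=A(M)+FP(M)-FR(M)-RA(M)$ and $t(X_m)-t(X_0)=A(M)+BP(M)-BR(M)-RA(M)$; since $X_m$ is a translate of $X_0$ these sums are equal, forcing $FP(M)-FR(M)=BP(M)-BR(M)$ and hence $h(X_m)-h(X_0)=A(M)+\tfrac12\bigl(FP(M)+BP(M)\bigr)-\tfrac12\bigl(FR(M)+BR(M)\bigr)-RA(M)$. Substituting this and $m=A(M)+FP(M)+BP(M)+DM(M)+FR(M)+BR(M)+RA(M)$ into $3\bigl(h(X_m)-h(X_0)\bigr)-2m$ and collecting terms reproduces the definition of $\omega(M)$ verbatim, establishing the identity. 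I expect the main obstacle to be the ``no overshoot'' step of the case analysis — ruling out a ladder climb or a tail jump that advances $h$ or $t$ by more than one, and handling the degenerate small-width configurations cleanly; once that is pinned down, the remainder is bookkeeping that stays consistent by virtue of the even-width proposition.
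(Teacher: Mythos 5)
Your proof is correct and follows essentially the same route as the paper's: express both $m$ and the total displacement in terms of the seven move-type counts and verify that the resulting inequality is equivalent to $\omega(M)>0$. The paper simply asserts the displacement formula $A(M)-RA(M)+\tfrac12\left(FP(M)-FR(M)+BP(M)-BR(M)\right)$, whereas you derive it from the per-move head/tail increments plus the translate condition (which forces $FP-FR=BP-BR$), so your write-up is if anything more complete on the one step the paper leaves unjustified --- both versions share the tacit assumption that the translation vector $\vec a$ is componentwise nonnegative, which you at least state explicitly.
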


\begin{proof}
    Since the move types are mutually exclusive, $A(M) + FP(M) + BP(M) + DM(M) + FR(M) + BR(M) + RA(M) = m$. Additionally, the displacement of $M$ can be characterized by $A(M) - RA(M) + (1/2)\times(FP(M) - FR(M) + BP(M) - BR(M))$. Therefore the speed of $M$ is
    
    \[\frac{A(M) - RA(M) + FP(M)/2 - FR(M)/2 + BP(M)/2 - BR(M)/2}{A(M) + FP(M) + BP(M) + DM(M) + FR(M) + BR(M) + RA(M)}.\]
    
    It is straightforward to check:
    \[2/3 < \frac{A(M) - RA(M) + FP(M)/2 - FR(M)/2 + BP(M)/2 - BR(M)/2}{A(M) + FP(M) + BP(M) + DM(M) + FR(M) + BR(M) + RA(M)} \iff 0 < \omega(M).\]
\end{proof}

We next introduce an important theorem which, as a corollary, demonstrates that most movesets do not have speed greater than 2/3.

\begin{theorem}
    For any trajectory $M$ with no two ascents occurring in a row and not both beginning and ending with an ascent, $\omega(M) \leq 0$.
\end{theorem}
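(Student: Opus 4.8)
The plan is to reduce the statement to a purely combinatorial counting argument about how ascents, pushes, retreats, and dead moves interact across the width-parity structure already established in Proposition 3 (the excerpt's numbering). The key observation is that an ascent can only be performed from an even-width configuration, and it preserves even width; meanwhile front/back pushes and front/back retreats each flip width parity, while dead moves and reverse ascents preserve it. So I would first partition the trajectory $M$ at the ascent moves into blocks $M = B_0 \oplus a_1 \oplus B_1 \oplus a_2 \oplus \cdots \oplus a_r \oplus B_r$, where each $a_i$ is an ascent and each $B_i$ is a (possibly empty) maximal run of non-ascent moves. By hypothesis no two ascents are adjacent, so every internal block $B_1,\dots,B_{r-1}$ is nonempty; and since $M$ does not both begin and end with an ascent, at least one of $B_0$, $B_r$ is nonempty. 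Since $\omega$ is additive over this partition and each ascent contributes weight exactly $1$, it suffices to show that each internal block $B_i$ ($1 \le i \le r-1$) has weight $\omega(B_i) \le -1$, and that the terminal blocks $B_0$, $B_r$ have weight $\le 0$ each, with the extra $-1$ we need coming from whichever terminal block is forced to be nonempty (or being absorbed differently — see below).

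The heart of the argument is the block estimate: a nonempty run $B$ of non-ascent moves that begins and ends at even-width configurations must satisfy $\omega(B) \le -1$, and a nonempty run that starts or ends at odd width (a terminal block, since the very start/end of $M$ need not have even width) satisfies $\omega(B) \le -1/2$. Here is why the even–even case gives $-1$: by Proposition 3, $B$ contains an even number of parity-flipping moves, i.e. $FP(B) + BP(B) + FR(B) + BR(B)$ is even. If this sum is $0$, then $B$ consists entirely of dead moves and reverse ascents, and since $B$ is nonempty its weight is at most $-2$ (one dead move) or $-5$ (one reverse ascent) — in any case $\le -1$. If the sum is $\ge 2$, then even in the best case (two front pushes and nothing else) the weight is $-1/2 - 1/2 = -1$, and any other move type present only decreases it further. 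This case analysis — really just checking that the move weights $-1/2, -2, -7/2, -5$ are calibrated so that every admissible nonempty even-to-even block costs at least $1$ — is the main engine. I would write it as: $\omega(B) \le -\tfrac12(FP+BP+FR+BR) - 2 DM - 5 RA$, and then argue that subject to $FP+BP+FR+BR$ even and $(FP+BP+FR+BR) + DM + RA \ge 1$, the right-hand side is $\le -1$ (treating the two subcases "the even sum is $0$" and "the even sum is $\ge 2$").

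The bookkeeping step is then to account for the terminal blocks and for whether $M$ begins/ends with an ascent. If $M$ begins with an ascent $a_1$, then $B_0$ is empty; the configuration before $a_1$ has even width, so $B_1$ is an even-to-even block (nonempty, as $a_1,a_2$ are not adjacent) and contributes $\le -1$, cancelling $a_1$'s $+1$. Similarly if $M$ ends with an ascent, $B_{r-1}$ (or $B_r$'s predecessor) absorbs the last ascent. If $M$ neither begins nor ends with an ascent, then $B_0$ and $B_r$ are both present; the internal blocks already cancel $a_2,\dots,a_r$ — wait, more carefully: there are $r$ ascents contributing $+r$, and $r-1$ internal even-to-even blocks each contributing $\le -1$, leaving a potential surplus of $+1$; this surplus is killed because the block structure forces one more unit of deficit, namely the even-to-even block sitting between the last ascent $a_r$ and the end must be nonempty (this is $B_r$ minus its terminal odd-width tail, but in fact $B_r$ itself, starting at even width after $a_r$, either is nonempty — giving $\le -1$ — or is empty, contradicting that $M$ doesn't end with an ascent). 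I expect the fiddly part to be exactly this last piece of casework: cleanly matching $r$ ascents against $\ge r$ blocks each of weight $\le -1$, while correctly handling the fact that the global start and end of $M$ carry no parity constraint, so a terminal block may only be "half-charged." The clean way to organize it is to observe that the number of nonempty non-ascent blocks is at least $A(M)$ under the hypotheses (no two ascents in a row, not both-ends ascents), that all but possibly the first and last such blocks are even-to-even with weight $\le -1$, and that the first/last contribute $\le 0$; a short inequality then yields $\omega(M) \le A(M) - A(M) = 0$. The main obstacle is getting this matching exactly right — in particular ruling out the degenerate configurations where too few blocks are available — rather than any analytic difficulty.
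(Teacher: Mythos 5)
Your argument is correct and is essentially the paper's own proof: both partition $M$ at the ascents and use the width-parity fact (Proposition 3.0.3) together with the calibration of the move weights to show that every gap between consecutive ascents costs at least $1$ (best case: two front/back pushes at $-1/2$ each), so each ascent's $+1$ is cancelled. One small point that your closing summary inequality ($\omega(M)\le A(M)-A(M)$) elides, and that the paper's proof also leaves implicit: when exactly one end of $M$ is an ascent, the single nonempty terminal block must be charged a full $-1$, which requires the free end of $M$ to have even width; this follows because $X_0$ and $X_m$ are translates and hence have the same width parity.
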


\begin{proof}
    After an ascent, if a front push, back push, front retreat, or back retreat occurs, this changes the width of the placement, and since prior to the first move the placement had even parity, for another ascent to occur, another one of the listed moves must occur. \\
    
    Without loss of generality let us assume $M$ begins with an ascent, therefore it cannot end with one. Let us partition the moveset $m(M)$ into separate blocks $B_1,\dots, B_k$ where each block begins with an ascent and contains no other ascents. Since no two ascents can occur in a row, each block must consist of at least two moves. Additionally since each new block must begin with an ascent, a block must end with a placement with even width. \\
    
    $B_i$ contains one ascent and at least one other move $a$. If $a$ is any type of move besides a front or back push, $\omega(B_i)$ is negative. Otherwise, if $a$ is a front push or back push the resulting placement has odd width and another front push, back push, front retreat, or back retreat must occur. This implies $\omega(B_i) \leq 0$, with equality holding only when $B_i$ is of the form $\{A, FP/BP, FP/BP\}$. Since $B_i \leq 0$ for all $1 \leq i \leq k$, $\omega(M) \leq 0$, as desired. 
    
\end{proof}

Since we know that the maximum speed of a configuration is 1, and we are looking at non-maximal configurations, every moveset has at least one non-ascent move. Therefore, we will assume for the remainder of this paper that the last move in any moveset is not an ascent. 

\section{$p=1,2,4$}

For a configuration which is not speed of light, observe that it may reach speed arbitrarily close to speed 1 by first shifting into a speed-of-light configuration, repeating its set of moves sufficiently long, then moving back to the original configuration. We will consider only trajectories which do not use this strategy. Rigorously, if a trajectory between translates does not perform any moves corresponding to any ``speed-of-light'' configuration's optimal moveset, call the corresponding configuration \textit{non-speed-of-light}.

\begin{theorem}
	A non-speed-of-light configuration $C$ with 1, 2, or 4 pieces cannot have speed greater than 2/3.
\end{theorem}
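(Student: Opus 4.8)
The plan is to run the argument through the weight function $\omega$. By the Lemma it suffices to show that $\omega(M)\le 0$ for every non-speed-of-light trajectory $M$ of a configuration with $1$, $2$, or $4$ pieces, and I would handle the three sizes separately, expecting them to be of very unequal difficulty.

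The case $p=1$ is vacuous: a single piece always has width $1$ and so can never contain a true ladder, hence the only one-piece configuration is the atom, which is a speed-of-light configuration, and there is nothing left to prove. For $p=2$ the key point is that every ascent is already a speed-of-light move. If a two-piece placement admits a true ladder it must be a frog placement $\{x,x+e_i\}$ --- the only true ladder available with two pieces has length one --- and the corresponding ascent is exactly the hop $x\mapsto x+2e_i$ occurring in the frog's optimal moveset. Thus a non-speed-of-light two-piece trajectory performs no ascents at all, so $A(M)=0$ and $\omega(M)=-(1/2)(FP(M)+BP(M))-2\,DM(M)-(7/2)(FR(M)+BR(M))-5\,RA(M)\le 0$.

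The case $p=4$ carries all the content. By the preceding Theorem, together with our standing assumption that no moveset ends with an ascent (so $M$ cannot both begin and end with an ascent), it is enough to show that a non-speed-of-light four-piece trajectory never contains two consecutive ascents. For this I would prove the structural claim: \emph{if a four-piece placement $X$ admits a true ladder whose climb $X'$ again admits a true ladder, then $X$ is a serpent placement}. The climb carries the unique back-border piece of $X$ from border $t(X)$ to border $t(X)+w(X)$, and since each hop moves a piece's border index by exactly $\pm 2$, a climb needs at least $w(X)/2$ rungs; with only three non-base pieces this forces $w(X)\in\{2,4,6\}$. One then checks, for each of these widths and for each way the non-base pieces can be distributed over the borders, whether the climb-image $X'$ still has a unique back-border piece with a piece sitting on the border immediately ahead of it, which is the prerequisite for $X'$ to admit a further true ladder. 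Widths $2$ and $6$ fail at once (for $w=2$ the image has a three-piece back border; for $w=6$ the image has an empty border directly in front of its back piece), and $w=4$ survives only for the serpent. Given the claim, two consecutive ascents in a non-speed-of-light four-piece trajectory would require a serpent climb, which is forbidden; so no such pair occurs and $\omega(M)\le 0$ by the preceding Theorem.

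The main obstacle is the $w=4$ branch of this finite check, which splits further according to the location of the single non-rung piece and according to which coordinate directions the successive hops of the climb use; in each branch one must verify that the adjacency required for $X'$ to admit a further true ladder pins the shape of $X$ down to precisely $\{x,x+e_i,x+e_i+e_j,x+2e_i+e_j\}$. A little care is also needed to fix the precise meaning of ``a move corresponding to a speed-of-light configuration's optimal moveset'' --- a hop of a frog placement, respectively a climb of a serpent placement --- and to dispose of meandering climbs (with backward hops or repeated rungs), but these present no real difficulty.
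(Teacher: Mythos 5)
Your proposal matches the paper's proof in all essentials: $p=1$ is trivial, for $p=2$ every ascent is already the frog's optimal hop so a non-speed-of-light trajectory has $A(M)=0$ and $\omega(M)\le 0$, and for $p=4$ you reduce to showing that two consecutive ascents force a width-$4$ placement that must be the serpent, which is exactly the paper's route. The only difference is explicitness: the paper actually carries out the width-$4$ finite check (splitting into the case where the base, first rung, and last rung are collinear, which contradicts an open landing square, and the L-shaped case, which yields the serpent), whereas you defer that verification, but the structure you describe for it is the correct one.
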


\begin{proof}
When $p = 1$, this is immediately true, since there is only one configuration, and it can translate itself with speed 1 or - 1. 

For $p=2$, any trajectory containing a jump (which must be an ascent) must contain the frog's optimal trajectory (a jump), therefore any non-speed-of-light configuration is limited to only shifts. Therefore, the optimal trajectory of $C$ cannot contain any ascents, thus $C$ cannot have speed greater than 2/3.\\

For $p=4$, if $C$ has optimal trajectory without two ascents in a row, $C$ has speed at most 2/3. Suppose then that $C$ has two ascents in a row, without loss of generality let us assume its trajectory begins with two ascents. $C$ must have even width, and every border must contain a piece due to the parity of the ends of each successive ladder. Since $C$ can perform two ascents in a row, it has width 4. $C = \{p_1, p_2, p_3, p_4\}$ with $p_i$ on $l_i$. Say $p_1$ has jump $p_1: \xrightarrow{p_2} a_1 \xrightarrow{p_4} a_2$ for open locations $a_1, a_2$. $d(p_1, p_2)=1$, $d(a_1, p_2)=1$, and $d(a_1, p_4)=1$. Similarly, write the next move by $p_2$ as $p_2: \xrightarrow{p_3} b_1 \xrightarrow{p_1=a_2} b_2$. $d(p_2, p_3) = 1$ $d(p_3,b_1)=1$, and $d(a_2, b_1) = 1$. \\

Suppose $p_1$, $p_2$, and $p_4$ are collinear ($p_1$ on $x$, $p_2$ on $x + u_i$, $p_4$ on $x + 3u_i$), $a_2$ = $x + 4u_i$ and $a_1 = x + 2u_i$. However for $p_2$ to jump over $p_3$ and $p_1$, $p_3$ must occupy $a_1$, contradicting the assumption that $a_1$ is open. Say $p_1$ starts on $x$, $p_2$ on $x + u_i$, then $a_1$ is $x + 2u_i$, $p_4$ starts on $x + 2u_i + u_j$, and $a_2$ is  $x + 2u_i + 2u_j$. There are only two locations both adjacent to $p_2$ and 2 away from $a_2$, $a_1$ and  $x + u_i + u_j$. However if $p_3$ is on  $x + u_i + u_j$ and we perform the two ascents, we have performed the serpent configuration's trajectory, a contradiction. These cases are visualized in Figure \ref{fig4.1} Thus it is impossible for a non-speed-of-light configuration $C$ to have two ascents in a row, so $C$ must have speed at most 2/3. 
\end{proof}

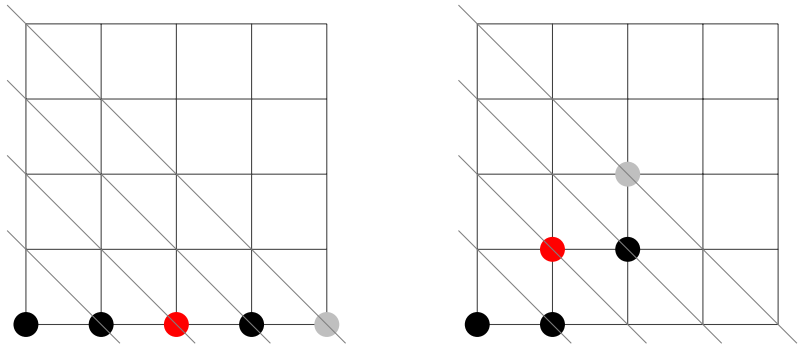
\begin{figure}[h]
    \centering
    \begin{tikzpicture}
        \gridThreeD{0}{0}{black!75}{4}{4}
        \node at (0,0) [circle, fill=black] {};
        \node at (1,0) [circle, fill=black] {};
        \node at (2,0) [circle, fill=red] {};
        \node at (3,0) [circle, fill=black] {};
        \node at (4,0) [circle, fill=black!25] {};
        \draw[black!50, thin](4.25,-.25) -- ++ (-4.5,4.5);
        \draw[black!50, thin](1.25,-.25) -- ++ (-1.5,1.5);
        \draw[black!50, thin](2.25,-.25) -- ++ (-2.5,2.5);
        \draw[black!50, thin](3.25,-.25) -- ++ (-3.5,3.5);
                    
        \gridThreeD{6}{0}{black!75}{4}{4}
        \node at (2+6,2) [circle, fill=black!25] {};
        \node at (1+6,0) [circle, fill=black] {};
        \node at (0+6,0) [circle, fill=black] {};
        \node at (2+6,1) [circle, fill=black] {};
        \node at (1+6,1) [circle, fill=red] {};
        \draw[black!50, thin](1.25+6,-.25) -- ++ (-1.5,1.5);
        \draw[black!50, thin](2.25+6,-.25) -- ++ (-2.5,2.5);
        \draw[black!50, thin](3.25+6,-.25) -- ++ (-3.5,3.5);
        \draw[black!50, thin](4.25+6,-.25) -- ++ (-4.5,4.5);
    \end{tikzpicture}
    
    \caption{In the case of $p=4$, we cannot place $p_3$ without a contradiction.}
    \label{fig4.1}
\end{figure}

\section{$p=3$}

\begin{theorem}
    No configuration of 3 pieces $C$ exists with speed greater than 2/3.
\end{theorem}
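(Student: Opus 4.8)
The plan is to reduce the statement to a single combinatorial fact about short ladders. By the Lemma relating weight to speed, a $3$-piece configuration $C$ has speed greater than $2/3$ if and only if some trajectory $M$ between translates satisfies $\omega(M) > 0$; and by the Theorem of the previous section, $\omega(M) \le 0$ whenever $M$ has no two ascents in a row and does not both begin and end with an ascent. Since by our standing assumption no moveset ends with an ascent (and no $3$-piece configuration is speed of light), it therefore suffices to prove that \emph{no trajectory of a $3$-piece configuration contains two consecutive ascents}. I would prove this by contradiction: assume $X \to X' \to X''$ are consecutive moves, both ascents, and extract a contradiction from the structure of $X$.

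The first step is to pin down $w(X)$. An ascent is a ladder climb whose base, by the Proposition characterizing moves that advance both borders, is the unique piece on the back border $T(X)$; with only three pieces the true ladder has one or two rungs. Since each hop changes the climbing piece's border by exactly $\pm 2$ (as in the proof of the first Proposition) and the net change, from $T(X)$ to one step past $H(X)$, equals $w(X) > 0$, a one-rung climb forces $w(X) = 2$ and a two-rung climb forces $w(X) = 4$. These are the only two cases.

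In the width-$2$ case $X$ has its base on border $t := t(X)$ and its other two pieces on border $t+1 = h(X)$; after the climb the base lies on border $t+2$ and the two other pieces, unchanged, make up the back border of $X'$. A back border with two pieces cannot be vacated by one move, so again by the Proposition on advancing both borders, $X' \to X''$ is not an ascent, a contradiction. In the width-$4$ case the two rungs must sit on borders $t+1$ and $t+3 = h(X)$ with border $t+2$ empty (the only way the base can hop $t \to t+2 \to t+4$), so $X$ has exactly one piece on each of $l_t, l_{t+1}, l_{t+3}$; after the climb $X'$ has pieces on $l_{t+1}, l_{t+3}, l_{t+4}$ with $t(X') = t+1$ and $h(X') = t+4$. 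For $X' \to X''$ to be an ascent its base must be the unique piece of $T(X') = l_{t+1}$, which would then have to climb to border $t+5$ and hence hop over a rung on the empty border $l_{t+2}$, which is impossible. In both cases we reach a contradiction, so no two ascents occur consecutively, whence $\omega(M) \le 0$ for every trajectory of a $3$-piece configuration and the theorem follows.

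The heart of the argument is this width dichotomy together with the rigidity of a ladder of length at most two, and the main obstacle I anticipate is a matter of care rather than depth: one must verify precisely that in $\mathds{Z}^2$, with borders taken in the $L^1$ norm, the rungs and the intermediate landing squares of the climb are \emph{forced} onto the consecutive borders $t+1, t+2, t+3$ (so the climb cannot, for instance, weave up and down), since that rigidity is exactly what prevents a second ascent. The remaining content is bookkeeping with the seven move types and the two Propositions already established.
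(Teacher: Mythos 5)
Your proposal is correct and follows essentially the same route as the paper: both arguments reduce the theorem to showing that a $3$-piece placement cannot perform two consecutive ascents, and both establish this by noting that the first ascent forces width $2$ (leaving two pieces on the new back border) or width $4$ with pieces on borders $l_t, l_{t+1}, l_{t+3}$ (leaving the new base isolated with no adjacent piece to hop). Your organization by ladder length rather than by border occupancy, and your explicit handling of the width-$2$ case, are only cosmetic differences from the paper's argument.
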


\begin{proof}
    To show this, we will demonstrate that no placement $X$ exists for $p=3$ such that two successive ascents are possible. If $X$ has two or three pieces occupying the same border, then the back or front border has two pieces, rendering successive ascents impossible. Otherwise assume $X$ has pieces occupying all different borders. The only possible way for $X$ to be able to perform a ladder climb is if it has width 4, since a piece jumping over two other pieces can travel distance at most 4. Let us consider the four borders passing through $X$, without loss of generality say $l_1 - l_4$. $l_{1}$ and $l_{4}$ must contain one piece each, $p_1$ and $p_3$ respectively, implying the last piece, $p_2$ can either lay on $l_2$ or $l_3$. If $p_2$ lies on $l_3$, $p_1$ cannot jump. Otherwise, $p_2$ lays on $l_2$. If $p_1$ can perform an ascent, the pieces now lay on $l_2$, $l_4$, and $l_5$, which implies $p_2$ cannot jump, as in Figure \ref{fig5.1}.
    
    \begin{figure}
        \centering
        \begin{tikzpicture}
            \gridThreeD{0}{0}{black!75}{4}{4}
            \node at (0,0) [circle, fill=black] {};
            \node at (1,0) [circle, fill=black] {};
            \node at (2,1) [circle, fill=black] {};
            \draw[black!50, thin](4.25,-.25) -- ++ (-4.5,4.5);
            \draw[black!50, thin](1.25,-.25) -- ++ (-1.5,1.5);
            \draw[black!50, thin](2.25,-.25) -- ++ (-2.5,2.5);
            \draw[black!50, thin](3.25,-.25) -- ++ (-3.5,3.5);
            
            \node at (5,2) {$\xrightarrow{}$};
            
            \gridThreeD{6}{0}{black!75}{4}{4}
            \node at (2+6,2) [circle, fill=black] {};
            \node at (1+6,0) [circle, fill=red] {};
            \node at (2+6,1) [circle, fill=black] {};
            \draw[black!50, thin](1.25+6,-.25) -- ++ (-1.5,1.5);
            \draw[red!50, thin](2.25+6,-.25) -- ++ (-2.5,2.5);
            \draw[black!50, thin](3.25+6,-.25) -- ++ (-3.5,3.5);
            \draw[black!50, thin](4.25+6,-.25) -- ++ (-4.5,4.5);
        \end{tikzpicture}
        
        \caption{After a jump, $p$ is isolated and cannot jump.}
        \label{fig5.1}
    \end{figure}
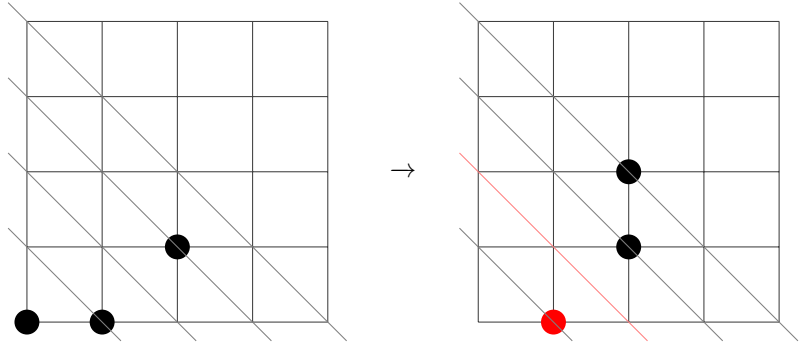
    Therefore no placement $X$ with $p=3$ pieces exists such that two consecutive ascents can be performed, as desired. No such configuration of $\mathds{Z}^n$ exists with speed greater than $2/3$.
\end{proof}

\section{$p>4$ for $\mathbb{Z}^2$}

We provide a proof that no configuration of greater than 4 pieces has a speed greater than 2/3 in the 2-dimensional case. We are confident that the approach taken here will produce the same sort of result in $\mathbb{Z}^n$, but we shall not pursue that here. Recall that if a moveset has no consecutive ascents, by Theorem 3.2, the moveset has speed at most 2/3, so our focus will be on movesets that have consecutive ascents. 

\begin{lemma}
    For $p>4$, there does not exist a configuration with a moveset containing 4 or more consecutive ascents. 
\end{lemma}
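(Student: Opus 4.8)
The plan is to show that a configuration with $p\ge 5$ pieces admitting four consecutive ascents is over-determined near its tail, ultimately forcing two pieces onto one square. Write the trajectory $X_0\to X_1\to X_2\to X_3\to X_4$ with every step an ascent; since an ascent is a true-ladder climb, step $i$ is the climb of a true ladder $L_i$ of $X_i$ by the single piece $\beta_i$ on its back border $l_{t_i}$, after which $\beta_i$ sits on border $h_i+1$ and no other piece has moved. First I would record the rigid skeleton this forces. The first hop of any climb is forward (the base is on the minimum border), so $L_i$'s first rung lies on $l_{t_i+1}$, making that border occupied in $X_i$; hence $t_{i+1}\le t_i+1$, and with $t_{i+1}>t_i$ we get $t_{i+1}=t_i+1$. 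Since also $h_{i+1}=h_i+1$, all four placements share one width, say $2k$, even since a true ladder forces even width. Provided $k\ge 2$, none of the first three climbs deposits a piece below border $t_0+2k$, so for $0\le i\le 3$ the border $t_0+i$ holds in $X_0$ exactly the one piece $\beta_i$.

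Next I would dispose of small $k$. If $k=1$, the back border of $X_1$ holds all $p-1\ge 4$ pieces other than $\beta_0$, so it cannot be a single base. If $k=2$, the borders $t_0,\dots,t_0+3$ exhaust $X_0$ with one piece apiece, so $p=4$. Both contradict $p\ge 5$, so $k\ge 3$.

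Then I would pin down positions near the tail. Translate $\beta_0$ to the origin and, relabelling axes, take $L_0$'s first hop to be $e_1$, so $\beta_1=e_1$ and $\beta_0$ lands on $2e_1$ after hopping it. Since $\beta_2$ (the sole piece on $l_{t_0+2}$) is $L_1$'s first rung, hence $\beta_1$ plus a forward unit vector $e_1$ or $e_2$, and $\beta_0$'s climb may not land on it, we get $\beta_2=e_1+e_2$. The piece $\beta_3$ on $l_{t_0+3}$ is both the second rung of $L_0$ (whose second hop is forced forward, as $l_{t_0+1}$ contains only $\beta_1$, already hopped) and the first rung of $L_2$; equating $2e_1+d=e_1+e_2+d'$ for forward unit vectors $d,d'$, the only solution is $\beta_3=2e_1+e_2$ with $L_2$'s first hop equal to $e_1$.

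Finally, the collision. The first rung $v$ of $L_3$ lies on $l_{t_0+4}$ at $\beta_3+d''=2e_1+e_2+d''$ for a forward unit vector $d''$. But $\beta_2$ climbing $L_2$ lands, after hopping $\beta_3$ in direction $e_1$, at $3e_1+e_2$ on that same border, and $v$ is a static piece there, so $d''\ne e_1$ and hence $v=2e_1+2e_2$. Since $v$ is untouched by the first three climbs — it lies on $l_{t_0+4}$, while $\beta_0,\beta_1,\beta_2$ start elsewhere and, as $k\ge 3$, end on borders $t_0+2k,\ t_0+2k+1,\ t_0+2k+2\ne t_0+4$ — the piece $v$ already belongs to $X_0$, occupying $2e_1+2e_2$, a square lying strictly between the tail and head of $X_0$. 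Yet the climb of $L_0$ sends $\beta_0$ over $\beta_1$ to $2e_1$ and then over $\beta_3$ to $2e_1+2e_2$, which the hop requires to be empty. This contradiction finishes the argument. The step I expect to be the real work is the bookkeeping: keeping straight, in each $X_i$, which square every piece occupies — recalling that a piece moves only while it is the base of an ascent — and noticing that the forced location of $L_3$'s first rung is exactly a square $\beta_0$ must hop onto while climbing $L_0$.
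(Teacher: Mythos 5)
Your argument is correct and follows essentially the same route as the paper's: you show the four backmost borders must each be singletons, force the backmost four pieces into the serpent $(0,0),(1,0),(1,1),(2,1)$, and observe that both candidate squares $(3,1)$ and $(2,2)$ for the fourth climber's first rung are forced empty because earlier climbs must land on them. Your version merely makes explicit the width bookkeeping ($k\ge 3$) and the small-width cases that the paper handles implicitly.
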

\begin{proof}
    Suppose for contradiction that there exists a configuration $C$ with moveset containing more than 3 consecutive ascents. Then there can only be one piece on each of the four backmost borders. Additionally, the width of $C$ prior to moving must be at least 6. It follows that without loss of generality, we can assume the first four pieces which ascend are located at $(0,0), (1,0), (1,1), \&\, (2,1)$, as in Figure \ref{fig6.1}.
    \begin{figure}[h]
        \centering
        \begin{tikzpicture}
            \gridThreeD{0}{0}{black!75}{4}{5}
            \gridThreeD{6}{0}{black!75}{4}{5}
            \node[label={\small $p_1$}] at (0,0) [circle, fill=black] {};
            \node[label={\small $p_2$}] at (1,0) [circle, fill=black] {};
            \node[label={\small $p_3$}] at (1,1) [circle, fill=black] {};
            \node[label={\small $p_4$}] at (2,1) [circle, fill=black] {};
            
            \node at (8,1) [circle, fill=black] {};
            
            \node at (2,0) {$\times$};
            \node at (1,2) {$\times$};
            \node at (2,2) {$\times$};
            \node at (3,1) {$\times$};
            \node at (5,2.5) {$\ldots$};

            \node at (6,0) {$\times$};
            \node at (7,0) {$\times$};
            \node at (7,1) {$\times$};
            \node at (7,2) {$\times$};
            \node at (8,0) {$\times$};
            \node at (9,1) {$\times$};
            \node at (8,2) {$\times$};
    
            \draw[black!50, thin](3.25,-.25) -- ++ (-3.5,3.5);
            \draw[red!50, thin](4.25,-.25) -- ++ (-4.5,4.5);
            \draw[black!50, thin](1.25,-.25) -- ++ (-1.5,1.5);
            \draw[black!50, thin](2.25,-.25) -- ++ (-2.5,2.5);
            
        \end{tikzpicture}
        \caption{The back borders of $C$ before any moves on the left, and after the first three ascents on the right. An $\times$ indicates a location a piece cannot be located in that board state. }
        \label{fig6.1}
    \end{figure}
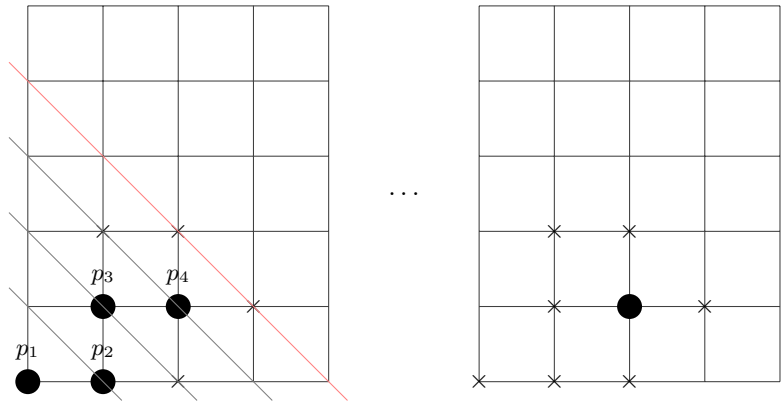
    
    However, after the first 3 ascents, the piece at $(2,1)$ $p_4$ is not adjacent to any pieces and therefore cannot ascend, a contradiction. 
\end{proof}

Define a piece's \textit{measure} by taking its location modulo 2, $\overbar{(x,y)}_2$. For example, $p_4$ in the above diagram has measure $(0,1)$. Note that when a piece jumps, its measure stays constant. This restricts the number of locations in $\mathds{Z}^2$ a piece can jump to, given its starting position. \\

We assume without loss of generality that a moveset $M$ begins with the maximum number of ascents. Note that this implies $M$ ends in a non-ascent. Define an \textit{isolating partition} of $M$ as follows. First, partition $m(M)$ sequentially into blocks $A_1, ... A_k$ such that each block begins with two or more consecutive ascents, but does not have consecutive ascents anywhere else and does not end with an ascent. So each new block begins at every occurrence of a sequence of two or more consecutive ascents, and ends with a non-ascent. This partition of $M$ is unique. 
For example, if a configuration had moveset of type $\{A, A, A, FP, DM, BP, A, A, DM, DM, A, FP, BP\}$, then the moveset would be partitioned into blocks $A_1 = \{A,A,A, FP, DM, BP\}$, $A_2 = \{A, A, DM, DM, A, FP, BP\}$. 

Let $L(A_i)$ be the number of ascents $A_i$ begins with. In our example, $L(A_1) = 3$ and $L(A_2) = 2$. By Lemma 6.1, $L(A_i) \leq 3$. Since $A_i$ ends with a non-ascent,  $\omega(A_i) < L(A_i)$. We wish to show $\omega(A_i) \leq 0$ for all $i$, since $\omega(M) = \omega(\sum A_i) = \sum \omega(A_i)$. Hence if $\omega(A_i) \leq 0$ for all $i$, then $\omega(M) \leq 0$. So it suffices to only consider blocks rather than entire movesets. Since $A_i$ can only begin with 2 or 3 ladder climbs, we only have these two cases to consider. \\

Call a block $A_i$ for which $\omega(A_i) \leq 0$ \textit{suboptimal}. Recall that  

\begin{lemma}
    If a consecutive sequence of moves $S \subset A_i$ satisfies $\omega(S) + L(A_i) \leq 0$, then necessarily, $\omega(A_i) \leq 0$, and thus $A_i$ is suboptimal.
\end{lemma}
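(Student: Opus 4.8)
The plan is to cut $A_i$ at the two endpoints of $S$ and estimate the three resulting pieces separately, exploiting that $\omega$ is additive over concatenations. Write $A_i = P \oplus S \oplus Q$, where $P$ is the (possibly empty) prefix of $A_i$ lying before $S$ and $Q$ the (possibly empty) suffix lying after it, so that $\omega(A_i) = \omega(P) + \omega(S) + \omega(Q)$. Since the hypothesis gives $\omega(S) \le -L(A_i)$, it will suffice to prove the two bounds $\omega(Q) \le 0$ and $\omega(P) \le L(A_i)$, for then $\omega(A_i) = \omega(P)+\omega(S)+\omega(Q) \le L(A_i) + \bigl(-L(A_i)\bigr) + 0 = 0$, so $A_i$ is suboptimal.

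First I would extract what the hypothesis buys us. Because every block of an isolating partition begins with at least two ascents, $L(A_i) \ge 2$, so $\omega(S) + L(A_i) \le 0$ forces $\omega(S) < 0$; in particular $S$ is nonempty and is not a run of ascents, hence cannot lie inside the initial run of $L(A_i)$ consecutive ascents with which $A_i$ begins. If the suffix $Q$ met that initial run, then $S$ would end at a position inside it and so lie inside it — impossible — so $Q$ contains no two consecutive ascents, the initial run being the only place where two ascents appear in a row in $A_i$. Since $A_i$, hence its suffix $Q$, does not end with an ascent, Theorem 3.2 applies to $Q$ and yields $\omega(Q) \le 0$. For the prefix: if $P$ lies inside the initial run it is itself a run of ascents, so $\omega(P) = |P| \le L(A_i)$; otherwise $P = I \oplus P'$, where $I$ is the initial run of $L(A_i)$ ascents and $P'$ is a nonempty prefix of the rest of $A_i$. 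By definition of $L(A_i)$ the move immediately following the initial run is not an ascent, so $P'$ begins with a non-ascent and contains no two consecutive ascents; Theorem 3.2 again applies to $P'$, so $\omega(P') \le 0$ and $\omega(P) = L(A_i) + \omega(P') \le L(A_i)$. Combining the two bounds completes the argument.

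The only point that needs genuine care — and the step I would write out most carefully — is the use of Theorem 3.2 on the fragments $P'$ and $Q$ rather than on $A_i$ itself: these are contiguous sub-trajectories that may abut ascents of $A_i$, so their endpoints must be checked against the theorem's hypotheses. What makes this harmless is that Theorem 3.2 forbids only the single pattern ``begins with an ascent and also ends with an ascent'', so a prefix starting with a non-ascent (as $P'$ does) or a suffix ending with a non-ascent (as $Q$ does) is automatically admissible; and the hypothesis $\omega(S) + L(A_i) \le 0$ together with $L(A_i) \ge 2$ is precisely what rules out the one bad scenario, namely $S$ — and with it the two-consecutive-ascents part of $A_i$ — sitting inside $Q$, the one configuration that would spoil the ``no two consecutive ascents'' requirement. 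Everything else is bookkeeping with the additivity of $\omega$.
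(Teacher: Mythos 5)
Your proof is correct and follows essentially the same route as the paper: the paper likewise splits $A_i$ into the initial run of ascents, the portion before $S$, the sequence $S$ itself, and the portion after $S$, bounding the complementary pieces by the one-ascent-per-block argument from Theorem 3.2, so your coarser decomposition $P \oplus S \oplus Q$ with two direct invocations of Theorem 3.2 packages the same computation. The only caveat --- one shared with the paper's own write-up --- is that applying Theorem 3.2 to a fragment such as $Q$ that may begin with an ascent and end, say, in the pattern $\{A, FP\}$ (which in isolation has positive weight) additionally requires noting that the placement at the end of $Q$ must again have even width, because the next block of the isolating partition opens with an ascent or the full moveset returns to a translate; this is what forces the compensating parity-changing move that the theorem's proof silently assumes.
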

\begin{proof}
    We may assume without loss of generality that $S$ does not begin with an ascent, since if it does, then removing the initial ascents produces another consecutive sequence of moves satisfying the same condition. We may also assume that the move following $S$ is an ascent or the end of the block, since otherwise, we may extend $S$ until the move following it is an ascent. \\
    
    Now, partition $A_i$ into smaller blocks in the following way: let the first block contain all consecutive initial ascents.. Then, until the beginning of $S$ is reached, partition the moves so each block begins with a non-ascent and continues until an ascent is reached, and let the block end with an ascent. Each of these blocks will contain exactly one ascent and contain at least one non-ascent, since $A_i$ only contains one sequence of consecutive ascents. By assumption, the last move before $S$ begins is an ascent, so this partitioning continues until $S$ is reached. Let $S$ be the next block. Then, partition the remaining moves of $A_i$ in the same way as in the proof of Theorem 3.2, that is, by letting each block begin with an ascent and continuing until another ascent is reached. These blocks will also contain only one ascent. \\
    
    By the same arguments used in the proof of Theorem 3.2, all blocks $B$ which are not the first block and $S$ satisfy $\omega(B) \leq 0$. Since the only blocks remaining are $S$ and the first one, which has weight $L(A_i)$, by summing the weights of each block, we conclude that $\omega(A_i) \leq 0$. 
\end{proof}

Call $S$ a \textit{suboptimal sequence of moves}, and call any consecutive sequence of moves $S$ that is not suboptimal \textit{optimal}. For example, if 2 ascents were followed by 4 front pushes, the four front pushes would be a suboptimal sequence of moves, and the block $A_i$ would therefore be suboptimal. Our strategy is to show that any block $A_i$ must contain a suboptimal sequence of moves.


\begin{lemma}
    If a block $A_i$ begins with exactly three ascents, $\omega(A_i) \leq 0$.
\end{lemma}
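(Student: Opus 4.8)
The plan is to invoke Lemma 6.2: a block $A_i$ that begins with exactly three ascents has $L(A_i)=3$, so it suffices to produce a consecutive sequence $S$ of moves of $A_i$, lying entirely after the third ascent, with $\omega(S)\le -3$; then $\omega(S)+L(A_i)\le 0$ and Lemma 6.2 gives $\omega(A_i)\le 0$.

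First I would nail down the geometry the three ascents leave behind. By Lemma 6.1 a fourth consecutive ascent is impossible, so the three ascents form the whole initial run and the fourth move is a non-ascent. Exactly as in the proof of Lemma 6.1, for a block to open with two (let alone three) ascents the placement at the start of the block must carry exactly one piece on each of its four backmost borders $l_0,\dots,l_3$, and these four pieces must form a serpent, which we fix as $\{(0,0),(1,0),(1,1),(2,1)\}$; since $p>4$ and the presence of a true ladder forces even width, the placement has width at least $6$. I would then trace the three ascents: because a hop preserves a piece's measure (its coordinates mod $2$), and because the base of each successive climb is the unique piece on the current back border with its first hop forced (there being only one neighbouring piece), the intermediate placements are rigidly determined. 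The conclusion, just as recorded in the proof of Lemma 6.1, is that after the third ascent the piece that started at $(2,1)$ sits alone on the new back border $l_3$ with all four of its neighbours empty, so no ascent can come next.

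Next I would show that re-establishing a placement that can ascend is costly, which yields the sequence $S$. Split on the moves of $A_i$ after the third ascent. If any front retreat, back retreat, or reverse ascent occurs anywhere in $A_i$, let $S$ consist of that single move; then $\omega(S)\le -7/2<-3$ and we are done. Otherwise every move after the third ascent is a front push, a back push, a dead move, or an isolated ascent. Since the stranded back-border piece has no neighbour, no ascent can occur until it acquires one; but its only non-retreating moves carry it onto border $l_4$, where (by the traced geometry) another piece already sits, so the single-piece back border is immediately destroyed and must be repaired before ascending, and the only way to give the stranded piece a neighbour without a retreat is to pull forward pieces back toward it by dead moves. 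An accounting of how many such moves are forced before the next ascent occurs -- or, if no further ascent occurs in $A_i$, among all remaining moves -- then shows that $A_i$ must contain either at least two dead moves (contributing weight $\le -4$) or at least six front or back pushes (contributing weight $\le -3$); taking $S$ to be the initial stretch of non-ascent moves that realizes this gives $\omega(S)\le -3$. In every case Lemma 6.2 applies.

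I expect the main obstacle to be that accounting in the middle step: controlling the cost of rehabilitating the stranded back-border piece. That a fourth ascent is blocked is immediate; ruling out a cheap recovery -- in particular, ruling out that after a couple of well-chosen front or back pushes the placement can ascend again -- requires a genuine, if finite, case analysis of the arrangements of the six ``front-cluster'' pieces produced by the three forced ascents, verifying that none of them can be re-knit into a serpent-headed placement using only dead moves and at most five front or back pushes. Confirming that the three ascents really do leave the $(2,1)$-piece isolated for every admissible starting placement is the other place where the work concentrates.
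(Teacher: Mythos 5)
Your high-level strategy is the paper's: reduce via Lemma 6.2 to exhibiting a consecutive sequence $S$ after the three ascents with $\omega(S)\le -3$, after pinning down the forced serpent geometry at the back and (post-ascents) at the front. But there is a genuine gap, and it starts with a wrong structural premise. You claim that the opening three ascents force exactly one piece on each of the four backmost borders and leave the piece that started at $(2,1)$ ``alone on the new back border with all four of its neighbours empty.'' That isolation is only forced when a \emph{fourth} consecutive ascent is demanded --- that is the hypothesis of Lemma 6.1, where the fourth backmost border must also be a singleton and the fourth piece's ladder must exist. With exactly three opening ascents, only the three backmost borders must be singletons; the border carrying $p_4$ may hold additional pieces, and the border beyond it may hold pieces adjacent to $p_4$ (e.g.\ at $(2,2)$, or the forced $p_5$ at $(1,3)$ in the one-piece case). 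The paper's proof is organized precisely around this: its first major case is ``$l_4$ has two or more pieces,'' which your picture excludes from the outset.

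Because the ``stranded piece'' premise fails, so does your claim that recovery requires dead moves, and hence your proposed dichotomy (at least two dead moves, or at least six front/back pushes). The paper exhibits candidate continuations with no dead moves at all, such as $A,A,A,FP,FP,FP,FP,A$, which accumulate only weight $-1$ before the next ascent and must be defeated by showing a further weight $-2$ is forced \emph{after} that ascent; i.e.\ the suboptimality is assembled across several inter-ascent gaps rather than in one block of the shape you describe. Finally, you explicitly defer ``the accounting'' --- the case analysis on how many pieces sit on $l_4$, where the next ascent originates ($l_4$, $l_5$, or beyond), and what must occupy $l_5$ --- but that accounting \emph{is} the proof; the paper's argument consists almost entirely of it. As written, the proposal is a correct plan with an incorrect geometric starting point and none of the casework that carries the burden of the lemma.
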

\begin{proof}
    Observe that if $A_i$ begins with exactly three consecutive ascents, the initial placement is forced to have a serpent configuration at the back, and after the three ascents, the resulting configuration is forced to have a serpent configuration in the front, as demonstrated in Figure \ref{fig6.2}. \\
    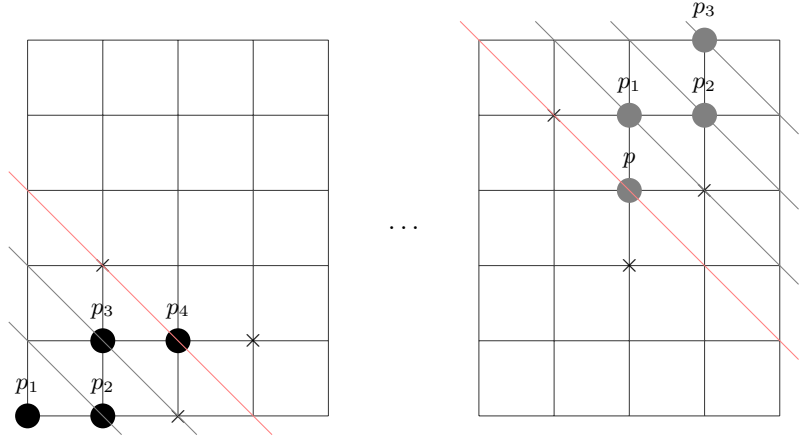
\begin{figure}[h]
        \centering
        \begin{tikzpicture}
            \gridThreeD{0}{0}{black!75}{4}{5}
            \gridThreeD{6}{0}{black!75}{4}{5}
            \node[label={\small $p_1$}] at (0,0) [circle, fill=black] {};
            \node[label={\small $p_2$}] at (1,0) [circle, fill=black] {};
            \node[label={\small $p_3$}] at (1,1) [circle, fill=black] {};
            \node[label={\small $p_4$}] at (2,1) [circle, fill=black] {};
            
            \node[label={\small $p$}] at (8,3) [circle, fill=black!50] {};
            \node[label={\small $p_1$}] at (8,4) [circle, fill=black!50] {};
            \node[label={\small $p_2$}] at (9,4) [circle, fill=black!50] {};
            \node[label={\small $p_3$}] at (9,5) [circle, fill=black!50] {};
            
            \node at (3,1) {$\times$};
            \node at (5,2.5) {$\ldots$};
            \node at (7,4) {$\times$};
            \node at (8,2) {$\times$};
            \node at (9,3) {$\times$};
            \node at (1,2) {$\times$};
            \node at (2,0) {$\times$};
    
            \draw[red!50, thin](3.25,-.25) -- ++ (-3.5,3.5);
            \draw[black!50, thin](1.25,-.25) -- ++ (-1.5,1.5);
            \draw[black!50, thin](2.25,-.25) -- ++ (-2.5,2.5);
            
            \draw[red!50, thin](10.25,0.75) -- ++ (-4.5,4.5);
            \draw[black!50, thin](10.25,1.75) -- ++ (-3.5,3.5);
            \draw[black!50, thin](10.25,2.75) -- ++ (-2.5,2.5);
            \draw[black!50, thin](10.25,3.75) -- ++ (-1.5,1.5);
    
        \end{tikzpicture}
        
        \caption{The borders before and after the first three ascents. The red borders denote  forced placements. An $\times$ indicates a location a piece cannot be located in that board state. }
        \label{fig6.2}
    \end{figure}
    
    By considering move weights, it suffices to demonstrate that a consecutive sequence of moves in $A_i$ occurs with weight $-3$, that is, a suboptimal sequence of moves must occur. Since each non-ascent has weight at most $-1/2$ if six non-ascents occur between two ascents, then the condition is satisfied. Note that if five non-ascents occur between two ascents, a sixth must occur by Proposition 3.0.3. Alternatively, if two dead moves occur or one dead move and one front/back push occurs, the condition is also satisfied for the same reasoning. Finally, any front retreat, back retreat, or reverse ascent is immediately suboptimal. \\
    
    Call the border containing the backmost piece prior to moving $l_1$. We first consider two cases, if $l_4$ has exactly 1 piece or if $l_4$ has two or more pieces. \\
    
    If $l_4$ has two or more pieces, first suppose the next ascent occurs from $l_4$. This implies that one of the pieces on $l_4$ must front push or dead move from $l_4$ first. Observe that a ladder climb to the front from $l_4$ cannot be performed unless $p$ or $p_2$ moves, since the pieces from $l_4$ can only hop over $p_1$ and $p_3$ by parity. If a piece from $l_4$ dead moves, then since an ascent can still not occur, another move must be made before an ascent is possible, so since a dead move and one more non-ascent has been performed, there is a suboptimal sequence of moves.\\
    
    Therefore, a possibly optimal sequence of moves must have a piece from $l_4$ make a front push before the other performs the ascent. A front push consists of a ladder climb from $l_4$ to beyond the front border, but one of $p$ or $p_2$ must move before that is possible. If $p$ or $p_2$ performs a dead move, then after the ladder climb from $l_4$, a suboptimal sequence of moves will be forced to occur before the next ascent. \\
    
    If $p$ or $p_2$ ladder climbs to perform a front push, a ladder climb from $l_4$ still cannot be performed, so necessarily, a front push must occur by the piece that previously moved. After this, it may be possible for a piece, call it $p'$ from $l_4$ to front push. The only optimal move afterwards is another front push, and since the remaining piece on $l_4$ can only hop over $p_1$ and $p_3$ in the front borders, for an ascent to be possible, $p'$ must front push. If an ascent is not possible, then since four front pushes have occurred and another non-ascent must occur, a suboptimal sequence of moves must happen. Otherwise, suppose an ascent from $l_4$ happens. The sequence of moves performed is $A,A,A, FP, FP, FP, FP, A$, and the sequence of moves after the consecutive ascents has weight -1. Now, if a new sequence of moves of weight -2 occurs, the total sequence will be suboptimal.\\
    
    From the initial configuration, Figure \ref{fig6.2}, it is necessary that the pieces on $l_4$ were either farther than distance 2 away, or were distance 2 away, but must have hopped over separate pieces. Thus, there are 2 pieces on $l_5$. If the next ascent occurs from $l_5$, the border beyond $p_3$ is empty and requires a piece there to build a ladder. This border cannot be filled by a back push since $l_5$ has 2 pieces, and thus, a dead move must occur before the ascent, forcing a suboptimal sequence of moves. Otherwise, if the next ascent occurs from beyond $l_5$, since a ladder climb from $l_5$ to the front is impossible, a dead move must occur when bringing the pieces on $l_5$ forward, again, forcing suboptimality. We conclude that if $l_4$ began with 2 or more pieces, and the first ascent after the three initial ones occurred from $l_4$, there is a suboptimal sequence of moves. \\
    
    Otherwise, suppose $l_4$ contains at least 2 pieces, and the first ascent after the initial three happens beyond $l_4$. In this case $l_4$ must be cleared, but neither piece can front push (a ladder climb to in front of $p_3$). If one piece dead moves forward and the other front pushes, then a suboptimal sequence of moves occurs. Otherwise, if no dead move occurs, then necessarily, one of $p$ or $p_2$ front pushes twice to open the ladder, then a piece front pushes from $l_4$, and another back pushes to clear $l_4$. However by a previous argument, there must be at least 2 pieces on $l_5$, so another move must occur before an ascent can. Since 5 moves have occurred, suboptimality is forced. Thus, if $l_4$ began with at least 2 pieces, a suboptimal sequence of moves must occur. \\

    Now, suppose $l_4$ has one piece, and let $p$ be the original front border piece. The back of the initial configuration must necessarily must be in the situation in Figure \ref{fig6.3}:\\
    
    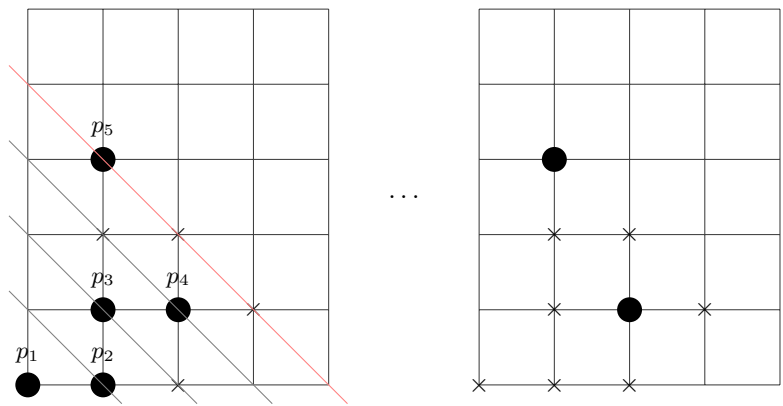
\begin{figure}[!h]
        \centering
        \begin{tikzpicture}
            \gridThreeD{0}{0}{black!75}{4}{5}
            \gridThreeD{6}{0}{black!75}{4}{5}
            \node[label={\small $p_1$}] at (0,0) [circle, fill=black] {};
            \node[label={\small $p_2$}] at (1,0) [circle, fill=black] {};
            \node[label={\small $p_3$}] at (1,1) [circle, fill=black] {};
            \node[label={\small $p_4$}] at (2,1) [circle, fill=black] {};
            \node[label={\small $p_5$}] at(1,3)  [circle, fill=black] {};
            \node at (8,1) [circle, fill=black] {};
            \node at (7,3) [circle, fill=black] {};

            \node at (2,0) {$\times$};
            \node at (1,2) {$\times$};
            \node at (2,2) {$\times$};
            \node at (3,1) {$\times$};
            
            \node at (5,2.5) {$\ldots$};

            \node at (6,0) {$\times$};
            \node at (7,0) {$\times$};
            \node at (7,1) {$\times$};
            \node at (7,2) {$\times$};
            \node at (8,0) {$\times$};
            \node at (9,1) {$\times$};
            \node at (8,2) {$\times$};
    
            \draw[black!50, thin](3.25,-.25) -- ++ (-3.5,3.5);
            \draw[red!50, thin](4.25,-.25) -- ++ (-4.5,4.5);
            \draw[black!50, thin](1.25,-.25) -- ++ (-1.5,1.5);
            \draw[black!50, thin](2.25,-.25) -- ++ (-2.5,2.5);
            
        \end{tikzpicture}
        
        \caption{The left diagram is the initial configuration, and the right diagram is the configuration after the three consecutive ascents. }
        \label{fig6.3}
    \end{figure}
    
    If the next ascent occurs on $l_4$, then again, $p$ is forced to dead move, and since $p_4$ is isolated, another piece must move to be adjacent to it. If these are separate moves, then two dead moves have been performed, a sequence of moves with weight $-4$. Otherwise, suppose these are the same move, so $p_4$ ascends right after. Then there is a new serpent configuration at the front, and two pieces on $l_5$, namely $p_5$ and the piece $p_4$ hopped over to ascend (which is necessarily $p$ as in Figure \ref{fig6.2}), as pictured in Figure \ref{fig6.4}. \\
    
    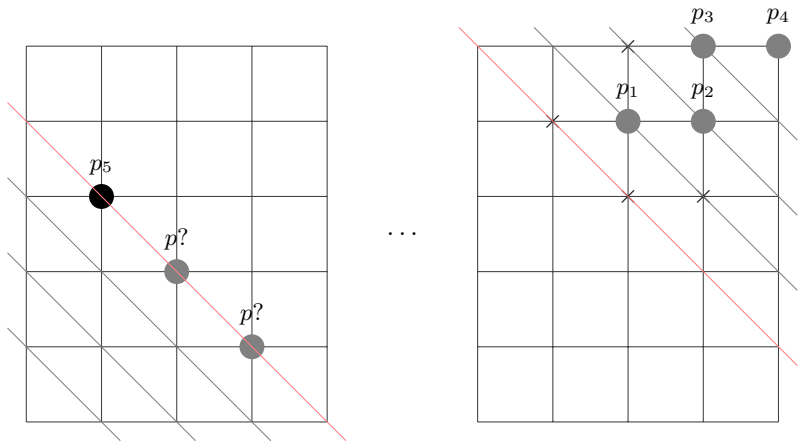
\begin{figure}[h]
        \centering
        
        \begin{tikzpicture}
            \gridThreeD{0}{0}{black!75}{4}{5}
            \gridThreeD{6}{0}{black!75}{4}{5}
            \node[label={\small $p$?}] at (2,2) [circle, fill=black!50] {};
            \node[label={\small $p$?}] at (3,1) [circle, fill=black!50] {};
            \node[label={\small $p_5$}] at(1,3)  [circle, fill=black] {};
            
            \node[label={\small $p_4$}] at (10,5) [circle, fill=black!50] {};
            \node[label={\small $p_1$}] at (8,4) [circle, fill=black!50] {};
            \node[label={\small $p_2$}] at (9,4) [circle, fill=black!50] {};
            \node[label={\small $p_3$}] at (9,5) [circle, fill=black!50] {};
            
            \node at (5,2.5) {$\ldots$};
            \node at (7,4) {$\times$};
            \node at (8,5) {$\times$};
            \node at (9,3) {$\times$};
            \node at (8,3) {$\times$};
            
            \draw[black!50, thin](3.25,-.25) -- ++ (-3.5,3.5);
            \draw[red!50, thin](4.25,-.25) -- ++ (-4.5,4.5);
            \draw[black!50, thin](1.25,-.25) -- ++ (-1.5,1.5);
            \draw[black!50, thin](2.25,-.25) -- ++ (-2.5,2.5);
            
            \draw[red!50, thin](10.25,0.75) -- ++ (-4.5,4.5);
            \draw[black!50, thin](10.25,1.75) -- ++ (-3.5,3.5);
            \draw[black!50, thin](10.25,2.75) -- ++ (-2.5,2.5);
            \draw[black!50, thin](10.25,3.75) -- ++ (-1.5,1.5);
    
        \end{tikzpicture}
        
        \caption{The configuration after $p_4$ ascends, in the case where $l_4$ has one piece and the next ascent follows from $l_4$. There are necessarily two pieces on $l_5$ and a serpent configuration in the front preventing an ascent from occurring. }      
        \label{fig6.4}
    \end{figure}    
    
    Since the previous two moves have weight $-1$, it is enough to show the next sequence of moves must have weight $-2$. Thus it suffices to assume no dead move can occur. Note that the parity is correct for an ascent to occur, as the pieces on $l_5$ can hop over $p_2$ and $p_4$, but given the front of the configuration, no ascent can occur. At least two front pushes (or a dead move) must occur in the front borders before one of the pieces on $l_5$ can climb to the front. However, this is three front pushes, implying a fourth front or back push must occur before an ascent can occur, so there must be a sequence of moves with weight $-2$ if the next ascent is to occur from $l_5$. \\
    
    Otherwise, if the next ascent happens beyond $l_5$, it is straightforward to see that a sequence of moves must first occur with weight $-3$, and we conclude that in any case, $\omega(A_i) \leq 0$.
\end{proof}

\begin{lemma}
    If a block $A_i$ begins with exactly two ascents, $\omega(A_i) \leq 0$. 
\end{lemma}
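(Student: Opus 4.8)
The plan is to follow the template of the proof of Lemma 6.3, adapted to the weaker hypothesis. Since $L(A_i) = 2$, Lemma 6.2 says it is enough to produce a consecutive sequence of moves $S \subseteq A_i$ with $\omega(S) \le -2$, and the natural choice for $S$ is the maximal run of non-ascents immediately following the two initial ascents (ending at the next ascent or at the end of the block). Because one dead move already contributes move weight $-2$, any front retreat, back retreat, or reverse ascent contributes at most $-7/2$, and four front or back pushes contribute $-2$, the lemma reduces to the geometric assertion that \emph{after two consecutive ascents, before the configuration can again be made ready for an ascent, it must perform at least one dead move, or at least one retreat or reverse ascent, or at least four shifting pushes}. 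As in Lemma 6.3, Proposition 3.0.3 will be used to promote an odd number of pushes to the next even number whenever the move following $S$ is an ascent.

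First I would pin down the shape of the placement immediately before and immediately after the two initial ascents. Since two consecutive ascents are possible, the two backmost borders each carry exactly one piece, so running the four-piece analysis from the proof of Theorem 4.1 on the four backmost borders $l_1,\dots,l_4$ forces those borders into the serpent pattern $\{x,\ x+e_i,\ x+e_i+e_j,\ x+2e_i+e_j\}$; the only new freedom, relative to the impossible $p=4$ case, is that extra helper pieces may occupy $l_3$, $l_4$, or borders still farther forward, but never $l_1$ or $l_2$ (which must stay singletons so that the second move is an ascent and not a front push). After the two ascents, the two pieces that moved have landed on the two borders immediately in front of the old head, forming a short serpent-like fragment at the front, and the new tail lies on $l_3$. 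Throughout I would track each piece's measure, its position modulo $2$, which is invariant under jumps; exactly as in Lemma 6.3, this invariant prevents the two freshly advanced pieces from both serving as rungs of the true ladder needed for the next ascent.

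With these shapes available, I would case-split as in Lemma 6.3: first on whether the new tail border $l_3$ carries one piece or two or more, and then on which border the next ascent is launched from ($l_3$, $l_4$, $l_5$, or a border still farther forward). In each branch the accounting is the same: to assemble the next true ladder the play must clear every back border below the intended new base and also occupy the border directly in front of the current head, and the measure and parity constraints force at least one of the pieces that must be relocated to be unable to drop straight into the ladder. Resolving this costs either a dead move or, if only shifting pushes are used, at least four of them before head and tail can advance together again, so in every branch $\omega(S) \le -2$. Combining this with the Theorem 3.2-style argument applied to the remaining single-ascent sub-blocks of $A_i$, each of weight at most $0$, gives $\omega(A_i) \le 0$.

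The main obstacle is that ``exactly two ascents'' pins down the placement far less tightly than ``exactly three ascents'' did in Lemma 6.3, where the placement was rigidly a serpent at the back and a serpent at the front. Here several genuinely different back configurations admit two but not three consecutive ascents, and the helper pieces may be placed in several ways, so the real labor is a complete enumeration of the post-two-ascent shapes together with the forced first non-ascent in each, verifying case by case that no economical rebuild of the ladder exists. The specific danger to exclude is a run of type $\{A, A, FP, FP, A, \dots\}$, which would have positive weight; the content of the lemma is exactly that when $p > 4$ no such cheap rebuild of a true ladder is geometrically possible.
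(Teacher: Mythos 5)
Your setup is the same as the paper's: reduce via Lemma 6.2 to exhibiting a consecutive sequence $S$ with $\omega(S)\le -2$, note that a single dead move or retreat already suffices while pushes must be counted four at a time (using Proposition 3.0.3 to promote three to four), pin down the forced serpent-like shape of the back and front borders around the two ascents, track measures modulo $2$, and case-split on the population of $l_3$ and on the border from which the next ascent launches. That is exactly the skeleton of the paper's argument. But the proposal stops at the skeleton: the claim that ``in every branch $\omega(S)\le -2$'' is asserted, not verified, and that verification \emph{is} the lemma. Worse, your specific choice of $S$ --- the maximal run of non-ascents immediately following the two initial ascents --- provably does not work in the hardest branches. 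You name $\{A,A,FP,FP,A,\dots\}$ as ``the specific danger to exclude'' and claim no such cheap rebuild is geometrically possible for $p>4$; the paper shows the opposite. When $l_3$ carries one piece and the third ascent launches from $l_3$, two front pushes genuinely suffice to rebuild a true ladder (the paper's Figure \ref{fig6.9}), so your $S$ has weight $-1$, which together with $L(A_i)=2$ and the remaining sub-blocks of weight $\le 0$ only bounds $\omega(A_i)$ by $+1$. The same happens in the branch where the third ascent comes from a singleton $l_4$ after a back push and a front push.

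The paper's resolution of these branches is to follow the forced geometry \emph{past} the third ascent (and in the scenario of Figure \ref{fig6.10}, past a fourth ascent), showing that the measure constraints on the even-numbered borders and the serpent now sitting at the front force at least three further non-ascent moves, or a dead move, before any subsequent ascent; the weight-$-2$ sequence $S$ lives there, not immediately after the opening pair of ascents. So the missing content is not just the routine enumeration you defer to ``real labor'': it is the realization that the suboptimal sequence must sometimes be hunted several ascents deep into the block, with the intermediate placements reconstructed move by move. As written, your argument would terminate in the $\{A,A,FP,FP,A\}$ branch with an unproved (and false) impossibility claim, so the proof is incomplete at precisely the point where the lemma has its content.
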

\begin{proof}
    By considering move weights, it suffices to demonstrate that a sequence of moves in $A_i$ after the two ascents occurs with weight -2, that is, a suboptimal sequence of moves must occur. In particular, it suffices to show either a dead move occurs or 4 front or back pushes occur between ascents. However, by Proposition 3.0.3, it suffices to show only 3 front or back pushes occur rather than 4. First observe that the starting and ending configurations after the two ascents must be as follows at the front and back borders respectively, pictured in Figure \ref{fig6.5}.\\
    
    \begin{figure}[h]
        \centering
        
        \begin{tikzpicture}
            \gridThreeD{0}{0}{black!75}{4}{5}
            \gridThreeD{6}{0}{black!75}{4}{5}
            \node[label={\small $p_1$}] at (0,0) [circle, fill=black] {};
            \node[label={\small $p_2$}] at (1,0) [circle, fill=black] {};
            \node at (1,1) [circle, fill=black] {};

            \node[label={\small $p$}] at (8,3) [circle, fill=black!50] {};
            \node[label={\small $p_1$}] at (8,4) [circle, fill=black!50] {};
            \node[label={\small $p_2$}] at (9,4) [circle, fill=black!50] {};

            \node at (5,2.5) {$\ldots$};
            \node at (7,4) {$\times$};
            \node at (8,2) {$\times$};
            \node at (1,2) {$\times$};
            \node at (2,0) {$\times$};
    
            \draw[black!50, thin](1.25,-.25) -- ++ (-1.5,1.5);
            \draw[red!50, thin](2.25,-.25) -- ++ (-2.5,2.5);
            
            \draw[red!50, thin](10.25,0.75) -- ++ (-4.5,4.5);
            \draw[black!50, thin](10.25,1.75) -- ++ (-3.5,3.5);
            \draw[black!50, thin](10.25,2.75) -- ++ (-2.5,2.5);
            \draw[black!50, thin](10.25,3.75) -- ++ (-1.5,1.5);
    
        \end{tikzpicture}
        
        \caption{The borders before/after the ones indicated by red in the above diagrams are forced placements. A $\times$ indicates a location a piece cannot be. }
        \label{fig6.5}
    \end{figure}
    
    Call the border containing the piece $p_1$ which is farthest back $l_1$. We begin by considering two cases, if $l_3$ has exactly 1 piece, $p_3$, or if it has multiple pieces $p_3$ and $p_3'$. Then, we consider sub-cases and determine that any trajectory must eventually become suboptimal. \\
    
    First, suppose $l_3$ has at least 2 pieces initially, $p_3$ and $p_3'$. Let us consider the next two moves following the ascents. If either is a dead move or worse, the trajectory is suboptimal, so suppose the next two moves are front or back pushes. Since $l_3$ has at least 2 pieces, the first move must be a front push. If the front push is not performed by $p_3$ or $p_3'$, then the next move must also be a front push, but since the width of the configuration is odd after the first front push, $p_3$ or $p_3'$ could not have performed the front push. Since $l_3$ contains two pieces still, the next move is not an ascent, so the sequence of moves is suboptimal.\\
    
    Otherwise, suppose the first front push was a ladder climb from $l_3$ to the front. Without loss of generality suppose $p_3$ makes the climb. If the second move is a back push from $l_3$, since there is a serpent configuration consisting of $p, p_1, p_2, p_3$, a ladder climb is not possible afterwards, and thus the trajectory is suboptimal. \\
    
    Instead, suppose the second move is a front push. Since there is a serpent in the front, the only move which allows for a ladder climb (necessarily by $p_3'$) is $p_3$ shifting forward. Then, a ladder climb by $p_3'$ must finish by hopping to where $p_3$ was prior to shifting, then hopping over $p_3$. This implies $p_3$ and $p_3'$ originally had the same measure, so they were distance at least 4 away. Therefore, there must be at least 2 pieces on $l_4$ which $p_3$ and $p_3'$ hopped over, $p_4$ and $p_4'$, as pictured in Figure \ref{fig6.6}. 
    
    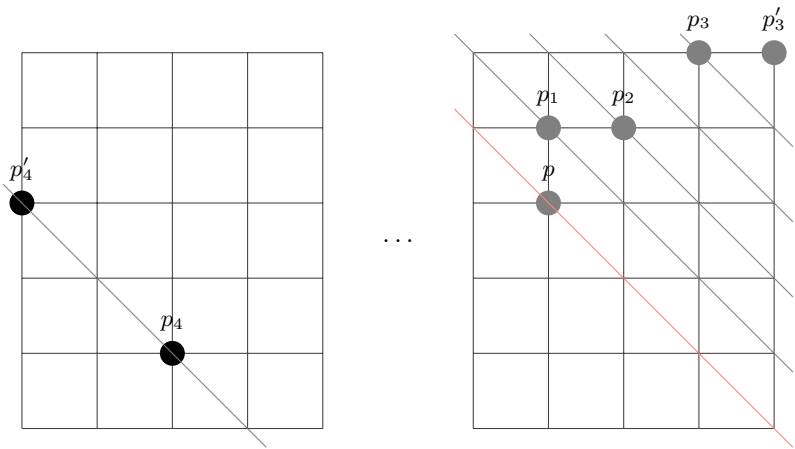
\begin{figure}[h]
        \centering
       
        \begin{tikzpicture}
            \gridThreeD{0}{0}{black!75}{4}{5}
            \gridThreeD{6}{0}{black!75}{4}{5}
            \node[label={\small $p_4'$}] at (0,3) [circle, fill=black] {};
            \node[label={\small $p_4$}] at (2,1) [circle, fill=black] {};

            \node[label={\small $p$}] at (7,3) [circle, fill=black!50] {};
            \node[label={\small $p_1$}] at (7,4) [circle, fill=black!50] {};
            \node[label={\small $p_2$}] at (8,4) [circle, fill=black!50] {};
            \node[label={\small $p_3$}] at (9,5) [circle, fill=black!50] {};
            \node[label={\small $p_3'$}] at (10,5) [circle, fill=black!50] {};

            \node at (5,2.5) {$\ldots$};

            \draw[red!50, thin](10.25,-0.25) -- ++ (-4.5,4.5);
            \draw[black!50, thin](10.25,0.75) -- ++ (-4.5,4.5);
            \draw[black!50, thin](10.25,1.75) -- ++ (-3.5,3.5);
            \draw[black!50, thin](10.25,2.75) -- ++ (-2.5,2.5);
            \draw[black!50, thin](10.25,3.75) -- ++ (-1.5,1.5);
            
            \draw[black!50, thin](3.25,-0.25) -- ++ (-3.5,3.5);
    
        \end{tikzpicture}
        
        \caption{Right: the only possibly optimal trajectory if 2 pieces started on $l_3$, five moves in. Left: a corresponding possible placement on $l_4$ five moves in.}
        \label{fig6.6}
    \end{figure}
    
    By a parity argument, $p_4$ and $p_4'$ cannot hop over $p, p_2,$ or $p_3$, as they are on an even-numbered border. If the next ladder climb occurs from beyond $l_4$, then $p_4, p_4',$ and $p_3'$ all must move first, which is necessarily a sequence of moves with weight at maximum -2. Otherwise, if the next ladder climb occurs on $l_4$, the pieces in front of $p$ which can be used in the ladder are $p_1$ and $p_3'$. However, a piece needs to move to the border between the ones containing $p_2$ and $p_3$, and by parity, this piece cannot come from $l_4$. Hence, this must be a dead move, and we conclude that if $l_3$ began with 2 or more pieces, then $\omega(A_i) \leq 0$. \\
    
    Otherwise, suppose $l_3$ began with exactly 1 piece on it. We consider subcases based on where the next ascent in the the trajectory occurs from and the number of pieces on the following borders. First, suppose the next ascent comes from $l_4$ or beyond. If $l_4$ contains more than one piece, then in order for the trajectory to be optimal, the next two moves must be a back push from $l_3$, then a front push from $l_4$. However, this is impossible, since a back push from $l_4$ must be a ladder climb, but by a parity argument of where the frontmost border is, a ladder climb from $l_4$ cannot advance the front border. \\
    
    Otherwise, suppose the next ascent comes from $l_4$ or beyond and that $l_4$ contains exactly one piece. The necessary starting configuration is pictured in Figure \ref{fig6.7}.     
    \begin{figure}[h]
        \centering
       
        \begin{tikzpicture}
            \gridThreeD{0}{0}{black!75}{4}{5}
            \gridThreeD{6}{0}{black!75}{4}{5}
            \node[label={\small $p_1$}] at (0,0) [circle, fill=black] {};
            \node[label={\small $p_2$}] at (1,0) [circle, fill=black] {};
            \node[label={\small $p_3$}] at (1,1) [circle, fill=black] {};
            \node[label={\small $p_4$}] at (2,1) [circle, fill=black] {};
            \node[label={\small $p_5$}] at (1,3) [circle, fill=black] {};

            \node[label={\small $p$}] at (8,3) [circle, fill=black!50] {};
            \node[label={\small $p_1$}] at (8,4) [circle, fill=black!50] {};
            \node[label={\small $p_2$}] at (9,4) [circle, fill=black!50] {};

            \node at (5,2.5) {$\ldots$};
            \node at (7,4) {$\times$};
            \node at (8,2) {$\times$};
            \node at (1,2) {$\times$};
            \node at (2,0) {$\times$};
    
            \draw[black!50, thin](1.25,-.25) -- ++ (-1.5,1.5);
            \draw[red!50, thin](2.25,-.25) -- ++ (-2.5,2.5);
            
            \draw[red!50, thin](10.25,0.75) -- ++ (-4.5,4.5);
            \draw[black!50, thin](10.25,1.75) -- ++ (-3.5,3.5);
            \draw[black!50, thin](10.25,2.75) -- ++ (-2.5,2.5);
            \draw[black!50, thin](10.25,3.75) -- ++ (-1.5,1.5);
    
        \end{tikzpicture}
        
        \caption{On the left, part of the necessary configuration if $l_4$ contains one piece, prior to the first two ascents. On the right, the front borders after the first two ascents. }
        \label{fig6.7}
    \end{figure}
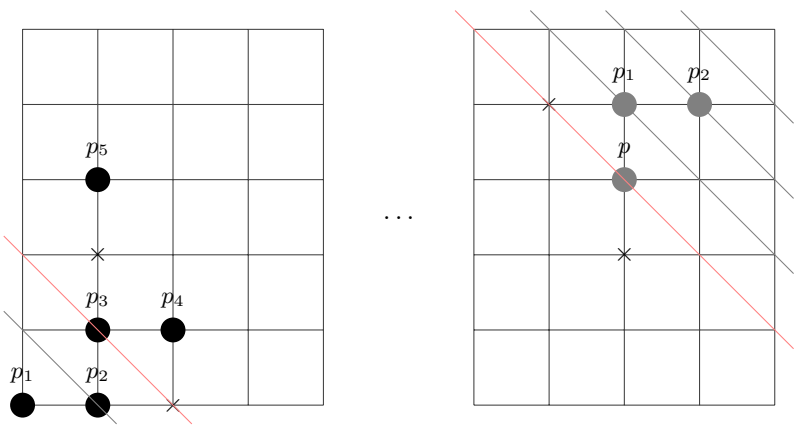
    
    Then for an optimal trajectory, the next two moves following the two ascents must be a back push from $l_3$ and a front push elsewhere (by assumption an ascent is not allowed). Assume the same piece does not perform the two moves, since otherwise, an ascent would be a faster trajectory, which would reduce to the previous lemma. Note that the front push cannot occur from $l_4$ by parity. Moreover, the back push from $l_3$ must hop over the sole piece on $l_4$, because otherwise, after the two moves there would be 2 pieces on $l_4$. 

    Finally, note that the front push must necessarily come from $p_2$ if an ascent is possible afterwards, since it is clear that a front push from  $p_1$ prevent ascents, and front pushes from any other piece result in a serpent configuration at the front which prevents an ascent. Hence, after the front and back push, if an ascent is possible, there must be two pieces on $l_5$: the piece which $p_2$ hopped over in its ascent, and the piece $l_4$ is to hop over first in its ascent (this may be $p_3$). After $p_4$ ascends, we now are in the configuration pictured in Figure \ref{fig6.8}.\\
    
    \begin{figure}[h]
        \centering
        
        \begin{tikzpicture}
            \gridThreeD{0}{0}{black!75}{4}{5}
            \gridThreeD{6}{0}{black!75}{5}{5}
            \node[label={\small $p$?}] at (2,2) [circle, fill=black!50] {};
            \node[label={\small $p_5$}] at (1,3)  [circle, fill=black] {};
            
            \node[label={\small $p$}] at (8,3) [circle, fill=black!50] {};
            \node[label={\small $p_1$}] at (8,4) [circle, fill=black!50] {};

            \node[label={\small $p_2?$}] at (10,4) [circle, fill=black!50] {};
            \node[label={\small $p_4?$}] at (11,4) [circle, fill=black!50] {};
            
            \node at (5,2.5) {$\ldots$};
            
            \draw[black!50, thin](3.25,-.25) -- ++ (-3.5,3.5);
            \draw[red!50, thin](4.25,-.25) -- ++ (-4.5,4.5);
            \draw[black!50, thin](1.25,-.25) -- ++ (-1.5,1.5);
            \draw[black!50, thin](2.25,-.25) -- ++ (-2.5,2.5);
            
            \draw[red!50, thin](11.25,-.25) -- ++ (-5.5,5.5);
            \draw[black!50, thin](11.25,0.75) -- ++ (-4.5,4.5);
            \draw[black!50, thin](11.25,1.75) -- ++ (-3.5,3.5);
            \draw[black!50, thin](11.25,2.75) -- ++ (-2.5,2.5);
    
        \end{tikzpicture}
        
        \caption{The configuration after $p_4$ ascends. The question marks denote where piece locations are not forced. }
        \label{fig6.8}
    \end{figure}
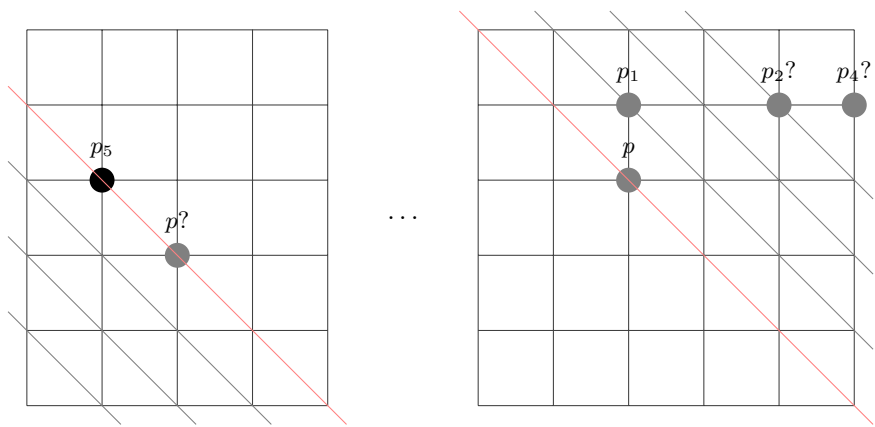 
    
    Now, for the trajectory to stay optimal, the next ascent must occur from $l_5$, since otherwise it would take two moves to clear $l_5$ and at least one to alter the front borders so that an ascent is possible. Pieces from $l_5$ may only hop over $p$ and $p_4$ in the front borders, so they must be in the next ladder. This implies that in the next two moves before the ascent, a piece must move to the border between $p_1$ and $p_2$. However, this must be a dead move or worse, since $l_5$ contains at least two pieces and thus cannot initiate a back push. Thus, all movesets for which the next ascent comes from $l_4$ or beyond are suboptimal. \\
    
    Finally, we consider the case where the next ascent comes from $l_3$, and $l_3$ contains only one piece. Since an ascent cannot immediately occur, the next two moves if they are to be optimal must be front pushes. It is quick to see that the only possibilities are either $p_1$ pushing twice or another unlabeled piece coming from an odd-numbered border pushing twice, as pictured in Figure \ref{fig6.9}.\\
    
    \begin{figure}[h]
        \centering
        
        \begin{tikzpicture}
            \gridThreeD{0}{0}{black!75}{4}{5}
            \gridThreeD{6}{0}{black!75}{4}{5}
            \node[label={\small $p$}] at (2,3) [circle, fill=black!50] {};
            \node[label={\small $p'$}] at (4,5) [circle, fill=black!50] {};
            \node[label={\small $p_2$}] at (3,4) [circle, fill=black!50] {};
            \node[label={\small $p_1$}] at (2,4) [circle, fill=black!50] {};

            \node[label={\small $p$}] at (8,3) [circle, fill=black!50] {};
            \node[label={\small $p_1$}] at (10,5) [circle, fill=black!50] {};
            \node[label={\small $p_2$}] at (9,4) [circle, fill=black!50] {};

            \draw[red!50, thin](4.25,0.75) -- ++ (-4.5,4.5);
            \draw[black!50, thin](4.25,1.75) -- ++ (-3.5,3.5);
            \draw[black!50, thin](4.25,2.75) -- ++ (-2.5,2.5);
            \draw[black!50, thin](4.25,3.75) -- ++ (-1.5,1.5);
            
            \draw[red!50, thin](10.25,0.75) -- ++ (-4.5,4.5);
            \draw[black!50, thin](10.25,1.75) -- ++ (-3.5,3.5);
            \draw[black!50, thin](10.25,2.75) -- ++ (-2.5,2.5);
            \draw[black!50, thin](10.25,3.75) -- ++ (-1.5,1.5);
    
        \end{tikzpicture}
        
        \caption{Note that the location of $p'$ on the left or $p_1$ on the right are not uniquely determined. }
        \label{fig6.9}
    \end{figure}
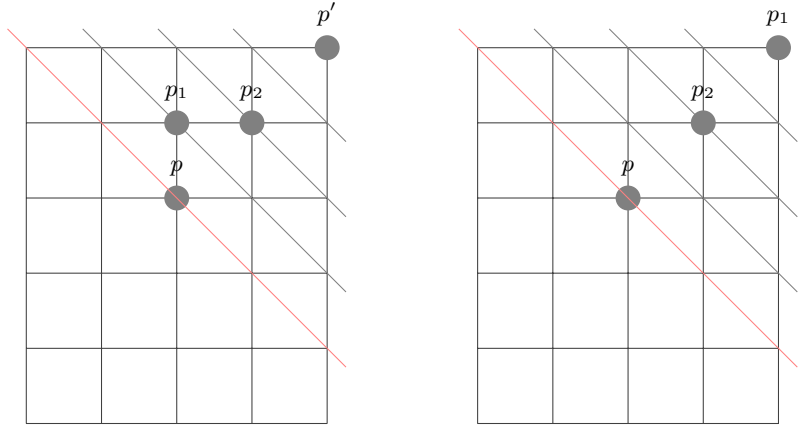

  If $p_3$ cannot ascend, the sequence of moves must be suboptimal, so assume $p_3$ can ascend, the third ascent so far in the moveset. We now consider cases based on which border the fourth ascent occurs from. If the fourth ascent occurs from $l_4$ or any even numbered border, these pieces cannot hop over $p$, $p_2$, or $p'$($p_1$) in the left-hand (resp. right-hand) cases. \\
    
    In the left-hand case from Figure \ref{fig6.9}, before the next ascent, a piece must move to the border between $p_2$ and $p'$, $p_2$ must move for $p_1$ to be free to be hopped over, and $p'$ must move so the frontmost piece can be hopped over. These must necessarily be performed as front or back pushes for optimality, however if these are performed as front or back pushes, this is at least 3 separate moves, making the sequence suboptimal. \\
    
    In the right-hand case from Figure \ref{fig6.9}, pieces must move to the border between $p$ and $p_2$ and the border between $p_2$ and $p_1$, and $p_1$ must move so the frontmost piece can be hopped over. Again, these must be performed as front or back pushes for optimality, but as front or back pushes, this is at least 3 separate moves, making the sequence suboptimal.\\
    
    Finally, suppose the fourth ascent occurs from $l_5$ or beyond. There is necessarily at least one piece on $l_4$ which must front push, and $p_3$ must move to open the ladder it climbed. If there are two pieces on $l_4$, then there are 3 moves that must occur which is suboptimal, so suppose there is only one piece on $l_4$, $p_4$, which both $p_3$ and $p_1$ hopped over. By similar arguments to earlier, $p_4$ cannot hop forward, it must shift to $l_5$. Now, if there are 2 or more pieces on $l_5$, then suboptimality is forced, as one must move before the ascent, which totals 3 moves before the ascent. However, it is possible that $l_5$ contains only one piece, $p_4$, as $p_5$ as shown in Figure \ref{fig6.10} could have been the piece to front push prior to the ascent of $p_3$.  \\
    
    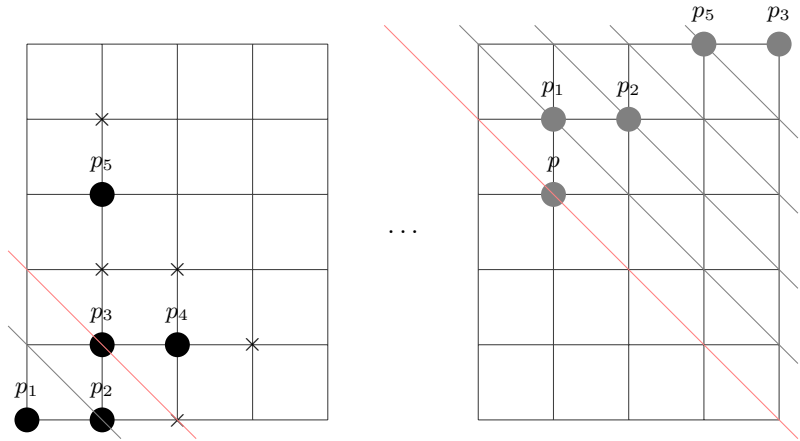
\begin{figure}[h]
        \centering
        
        \begin{tikzpicture}
            \gridThreeD{0}{0}{black!75}{4}{5}
            \gridThreeD{6}{0}{black!75}{4}{5}
            \node[label={\small $p_1$}] at (0,0) [circle, fill=black] {};
            \node[label={\small $p_2$}] at (1,0) [circle, fill=black] {};
            \node[label={\small $p_3$}] at (1,1) [circle, fill=black] {};
            \node[label={\small $p_4$}] at (2,1) [circle, fill=black] {};
            \node[label={\small $p_5$}] at (1,3) [circle, fill=black] {};

            \node[label={\small $p$}] at (7,3) [circle, fill=black!50] {};
            \node[label={\small $p_5$}] at (9,5) [circle, fill=black!50] {};
            \node[label={\small $p_2$}] at (8,4) [circle, fill=black!50] {};
            \node[label={\small $p_1$}] at (7,4) [circle, fill=black!50] {};
            \node[label={\small $p_3$}] at (10,5) [circle, fill=black!50] {};
            
            \node at (5,2.5) {$\ldots$};
            \node at (1,2) {$\times$};
            \node at (2,0) {$\times$};
            \node at (3,1) {$\times$};
            \node at (2,2) {$\times$};
            \node at (1,4) {$\times$};
    
            \draw[black!50, thin](1.25,-.25) -- ++ (-1.5,1.5);
            \draw[red!50, thin](2.25,-.25) -- ++ (-2.5,2.5);
            
            \draw[red!50, thin](10.25,-0.25) -- ++ (-5.5,5.5);
            \draw[black!50, thin](10.25,0.75) -- ++ (-4.5,4.5);
            \draw[black!50, thin](10.25,1.75) -- ++ (-3.5,3.5);
            \draw[black!50, thin](10.25,2.75) -- ++ (-2.5,2.5);
            \draw[black!50, thin](10.25,3.75) -- ++ (-1.5,1.5);
    
        \end{tikzpicture}
        
        \caption{On the left, the necessary setup if $l_4$ and $l_5$ only have one piece, and on the right, in the lone scenario where $l_5$ front pushed before $p_3$ ascended. }
        \label{fig6.10}
    \end{figure}
    
    In this case, then $p_4$ can ascend and optimality is still preserved. However, there must be two different pieces on $l_6$, the piece $p_5$ hops over when it front pushes, and the piece $p_3$ hops over after hopping over $p_4$ when it ascends. By a similar argument as earlier, the next ascent occurring from $l_6$ forces suboptimality, and it is straightforward to see that at least 3 unique moves must occur for the next ascent to occur from $l_7$ or beyond. Thus, if after the initial 2 ascents, the next ascent occurs from $l_3$, and $l_3$ only contained one piece, suboptimality is forced. We have exhausted all cases, and conclude that $\omega(A_i) \leq 0$.

    \end{proof}

    \begin{theorem}
        If $C$ is a configuration with 5 or more pieces, then $C$ has speed less than or equal to $2/3$.
    \end{theorem}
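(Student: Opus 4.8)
The plan is to derive this purely by assembling the weight machinery already established, since the hard combinatorial work lives in Lemmas~6.1, 6.3 and 6.4. By Lemma~3.1, the speed of $C$ exceeds $2/3$ if and only if some $m$-move trajectory $M$ between two translates represented by $C$ has $\omega(M)>0$; so it suffices to show $\omega(M)\le 0$ for every such $M$. I would fix an arbitrary such $M$ and first normalize it. Because $X_m$ is a translate of $X_0$, restarting $M$ at any intermediate placement and wrapping around (translating the tail of the trajectory appropriately) yields another legal trajectory between translates of $C$ with the same number of moves, the same displacement, and the same moveset up to a cyclic rotation, hence the same weight. Rotating so that the longest cyclic run of consecutive ascents occurs as an initial segment, I may assume $M$ begins with its maximum number of consecutive ascents; and since $p\ge 5$ forces $C$ to be non-speed-of-light (every speed-of-light configuration has at most four pieces), $M$ contains at least one non-ascent move, so that maximal run is not all of $M$ and $M$ therefore does not end with an ascent.

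Next I would split on the length $r$ of that maximal initial run of ascents. If $r\le 1$, then $M$ has no two ascents in a row and does not both begin and end with an ascent, so Theorem~3.2 gives $\omega(M)\le 0$ immediately. If $r\ge 2$, I would form the isolating partition $M = A_1\oplus A_2\oplus\cdots\oplus A_k$ described above: each block $A_i$ begins with at least two consecutive ascents, contains no other consecutive ascents, and ends with a non-ascent. By Lemma~6.1 no configuration with $p>4$ pieces has a moveset with four or more consecutive ascents, so every $A_i$ in fact begins with exactly two or exactly three ascents. Applying Lemma~6.4 to the blocks of the first type and Lemma~6.3 to those of the second gives $\omega(A_i)\le 0$ for all $i$, and since weight is additive over $\oplus$, $\omega(M)=\sum_{i=1}^{k}\omega(A_i)\le 0$. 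In both cases $\omega(M)\le 0$; as $M$ was arbitrary, Lemma~3.1 yields that $C$ has speed at most $2/3$.

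The substance of the argument has already been carried out in Lemmas~6.1, 6.3 and 6.4, whose extensive casework locates a suboptimal sub-sequence inside every block of two or three consecutive ascents; consequently the only delicate points at the level of this theorem are bookkeeping ones: checking that the normalization (longest cyclic ascent run placed first, which automatically makes the final move a non-ascent) is legitimate and weight-preserving, and checking that when $M$ does contain consecutive ascents the isolating partition genuinely carves all of $M$ into blocks of the shape required by Lemmas~6.3--6.4, leaving nothing outside the reach of those lemmas or of Theorem~3.2. I do not expect a real obstacle here: at this stage the theorem is essentially an assembly of the preceding results.
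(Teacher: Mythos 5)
Your proposal is correct and follows essentially the same route as the paper: reduce to the weight criterion of Lemma 3.1, dispose of trajectories without consecutive ascents via Theorem 3.2, and otherwise apply the isolating partition together with Lemmas 6.1, 6.3 and 6.4 and the additivity of $\omega$. Your explicit justification of the cyclic-rotation normalization is a welcome bit of extra care, but it is the same argument the paper invokes implicitly when it assumes the moveset begins with its maximal run of ascents.
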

    
    \begin{proof}
        Consider any $m$-move trajectory $M$ of $C$. If $M$ has no consecutive ascents, then Theorem 3.2 implies $M$ has speed at most 2/3. Otherwise, perform an isolating partition of $M = \sum_{i=1}^n A_i$. By the two previous lemmas, $\omega(M) = \omega(\sum A_i) = \sum \omega(A_i) \leq 0$, as desired. 
    \end{proof}
    
    \medskip\medskip
    
   \textbf{Acknowledgment.} The authors are grateful to the referees for many valuable suggestions and corrections.

\end{document}